\newtheorem{theorem}{Theorem}
\newtheorem{proposition}[theorem]{Proposition}
\newtheorem{lemma}[theorem]{Lemma}
\theoremstyle{definition}
\newtheorem{definition}[theorem]{Definition}
\let\vec\boldsymbol
\newcommand{\Ord}[1]{\ensuremath{\mathcal O
  \mathchoice{\big(#1\big)}{\big(#1\big)}{(#1)}{(#1)}
  }}
\newcommand{\ord}[1]{\ensuremath{o
  \mathchoice{\big(#1\big)}{\big(#1\big)}{(#1)}{(#1)}
  }}
\newcommand{\ode}{\textsc{ode}}
\newcommand{\sde}{\textsc{sde}}
\newcommand{\RR}{\mathbb R}
\newcommand{\NN}{\mathbb N}
\newcommand{\cdi}{\textsc{cdi}}
\newcommand{\D}[2]{\mathchoice
  {\frac{\partial #2}{\partial #1}}% display
  {{\partial #2}/{\partial #1}}% text
  {{\partial #2}/{\partial #1}}% script
  {{\partial #2}/{\partial #1}}% scriptscript
  }
\newcommand{\opL}{\mathcal L}
\newcommand\z{\hspace{-0.6em}}
\title{Numerical integration of ordinary differential equations with rapidly oscillatory factors}
\author{
J.~E. Bunder\thanks{School of Mathematical Sciences, University of Adelaide, South Australia~5005, Australia. 
\protect\url{mailto:judith.bunder@adelaide.edu.au}}
\and 
A.~J. Roberts\thanks{School of Mathematical Sciences, University of Adelaide, South Australia~5005, Australia. 
\protect\url{mailto:anthony.roberts@adelaide.edu.au}}
}
\begin{document}
\maketitle

\begin{abstract}
We present a methodology for numerically integrating ordinary differential equations containing rapidly oscillatory terms.
This challenge is distinct from that for differential equations which have rapidly oscillatory solutions: here the differential equation itself has the oscillatory terms.
Our method generalises Filon quadrature for integrals, and is analogous to integral techniques designed to solve stochastic differential equations and, as such, is applicable to a wide variety of ordinary differential equations with rapidly oscillating factors. 
The proposed method flexibly achieves varying levels of accuracy depending upon the truncation of the expansion of certain integrals.
Users will choose the level of truncation to suit the parameter regime of interest in their numerical integration.\\
keywords: highly oscillatory problems, ordinary differential equations.
\end{abstract}

\section{Introduction}

Ordinary differential equations (\ode{}s) containing rapidly oscillatory terms are a challenge for numerical computation.
A separate much researched challenge are \ode{}s where the equations are not themselves rapidly oscillatory, but do have rapidly oscillatory solutions.
Here we focus on the case where the \ode{} contains both terms which rapidly oscillate on a microscale time and terms which vary smoothly over macroscale times of interest.
The microscale oscillating terms combined with the slow macroscale terms in the \ode{} produce solutions with multiscale structure.
Typically, solutions are smoothly varying over the macroscale, but with superimposed microscale detail (e.g., Figure~\ref{fig:egOsc}). 
Such microscale detail interacts via nonlinearity to modify the apparent macroscale behaviour.

We consider the class of \ode{}s for some function $u(t)\in\RR^m$ of the form 
\begin{equation}
\frac{du}{dt}=a(t,u)+b(t,u)v(t),\quad u(t_n)=u_{t_n}\,,
\label{eq:ode}
\end{equation}
for smoothly varying coefficient functions $a,b:\RR\times\RR^m\to\RR^m$, and where the `vacillating'~$v(t)$ is some given rapidly oscillating periodic scalar function of time~$t$ with zero mean, and constant oscillation frequency~$\varpi$ (that is, period~$2\pi\varpi^{-1}$). 
The rapidly oscillating~$v(t)$ may be functions such as~$\sin\varpi t$ or~$e^{i\varpi t}$.
Suppose we are interested in sampling the solution over a relatively long macroscale time, say over time steps of size~$h$.
We assume the microscale oscillation is rapid with respect to the macroscale time scale~$h$ so that $\varpi^{-1}\ll h$\,.
For definiteness, we also assume time~$t$ and unknown~$u$ have been scaled so that the coefficient functions $a$ and~$b$ vary on a scale of one in both $t$ and~$u$.
Figure~\ref{fig:egOsc} plots solutions from the example \ode~\eqref{eq:expDE}, discussed in Section~\ref{sec:examples}, that are in the class~\eqref{eq:ode} of the \ode{}s considered here.
In these examples the multiscale structure of the solution is clearly visible.
Over the macroscale time interval~$[0,1]$ the general trend of the solution is revealed, but over microscale time intervals of the order $\varpi^{-1}=0.01$ the solution is highly oscillatory.
Such differential equations arise in a wide variety of systems, including molecular dynamics~\cite{Hong2001}, circuit simulations~\cite{Condon2009b}, chemical reactions~\cite{Samoilov2005}, and weather systems~\cite{Penland2008}. 

\begin{figure}
\centering
\includegraphics[scale=1]{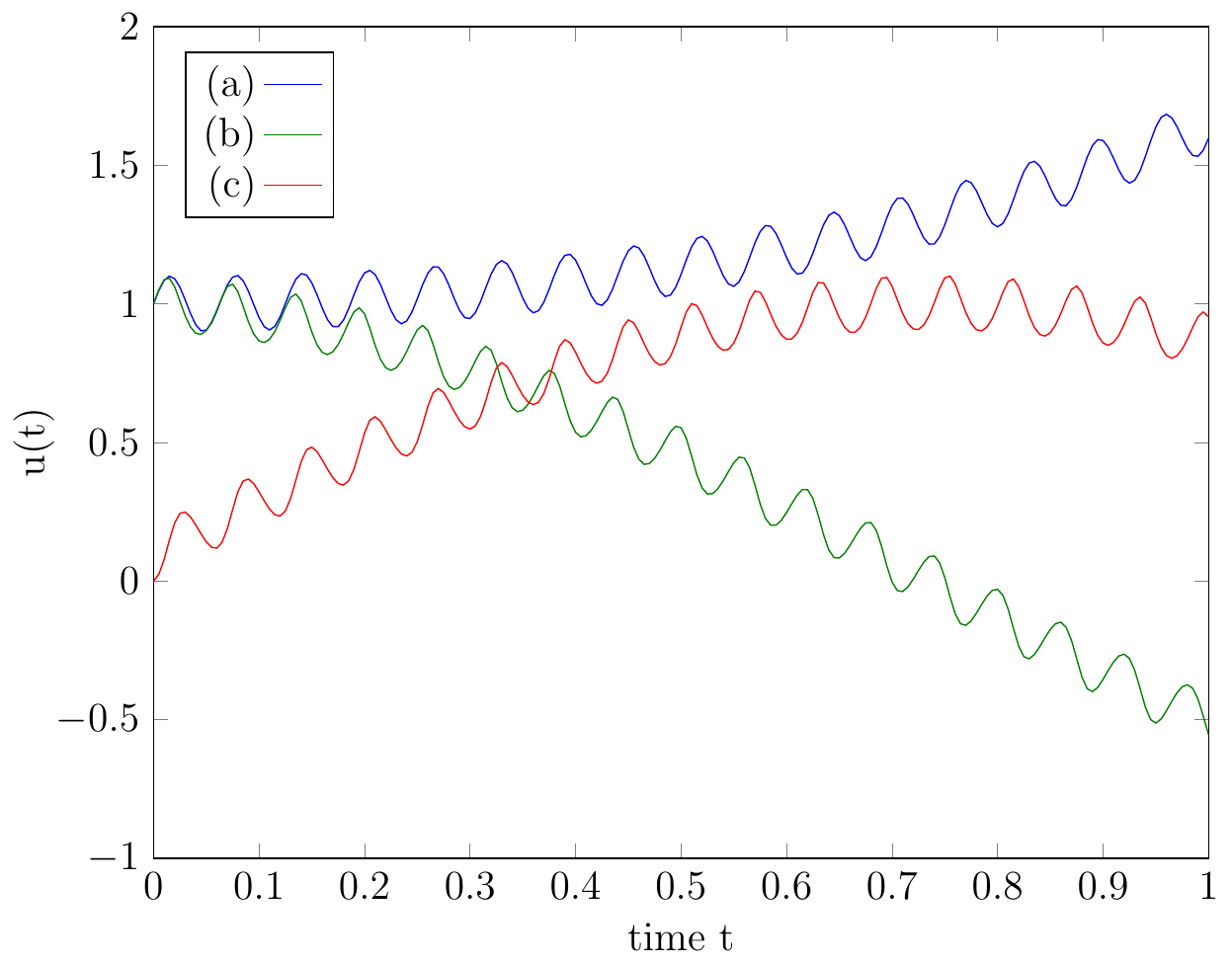}
\caption{Example solutions, using microscale time steps, of the \ode~\eqref{eq:expDE} for oscillations of strength $\mu=10$ and frequency $\varpi=100$, and initial condition $u(0)=1$: (a)~$\gamma=0$, $\alpha(t)=t$, $v(t)=\cos\varpi t$; (b)~real part for $\gamma=2$, $\alpha(t)=2i$, $v(t)=e^{i\varpi t}$; and (c)~imaginary part for $\gamma=2$, $\alpha(t)=2i$, $v(t)=e^{i\varpi t}$.}
\label{fig:egOsc}
\end{figure}

Established numerical techniques, such as Runge--Kutta or Gear's method, work well for \ode{}s without rapidly oscillating terms. 
But these techniques become computationally expensive when oscillations with microscale periods~$\varpi^{-1}$ are present, particularly when there is a significant difference between the two relevant time scales, $\varpi^{-1}\ll h$.
For example, Matlab's stiff \ode{} solver \verb|ode15s| takes $15$~time steps per microscale oscillation to reproduce the \ode{} solutions shown in Figure~\ref{fig:egOsc}.
For this reason, several numerical methods have recently been developed specifically for accurately and efficiently evaluating \ode{}s containing rapidly oscillating terms~\cite{Iserles2005, Iserles2006, Huybrechs2006, Olver2007} including a method by Condon, Dea\~no and Iserles~\cite{Condon2009a, Condon2010a, Condon2010b} which is discussed in Section~\ref{sec:exp-macro} as a comparison.
The aim herein is to develop efficient and flexible computational schemes where each time step spans many microscale oscillation periods.

Our novel method for numerically solving \ode{}s with rapidly oscillating terms is based upon iterating integrals. 
Section~\ref{sec:iterative} derives a multivariable Taylor series expansion for the solution at time~$t_{n+1}$, based about the solution at time~$t_n$ in powers of the typical microscale period of oscillation~$\varpi^{-1}$ and the macroscale time step~$h=(t_{n+1}-t_n)$ \cite{Kloeden2001}.
The integral approach empowers us to quantify the remainder term in the Taylor expansion, and hence empowers users to potentially bound the errors in any application of the approximation scheme.
In these systems, the macroscale time step~$h$ is  much longer than a typical microscale period of oscillation~$\varpi^{-1}$, so $h\varpi\gg 1$.
Validity of the Taylor expansion requires small enough $\varpi^{-1}$~and~$h$.
The method is analogous to a Taylor expansion scheme, originally developed for Ito stochastic differential equations (\sde{}s) governed by a Wiener process, which is achieved by an iterative application of the Ito formula~\cite{Kloeden2001, Kloeden1992}.

A typical Ito \sde{},
\begin{equation}
du=a(t,u)dt+b(t,u)dW_t\,,\label{eq:sde}
\end{equation}
where $W_t$~is a stochastic Wiener process, closely resembles the \ode~\eqref{eq:ode}. 
However, in contrast to the deterministic periodic function~$v(t)$ in \ode~\eqref{eq:ode}, the Wiener process oscillates over all time scales and is nondeterministic. 
Section~\ref{sec:wiener} relates our iterated integral method of \ode{}s~\eqref{eq:ode} to previously developed integral methods of \sde{}s~\eqref{eq:sde}.

For conciseness we adopt notation analogous to that used for \sde{}s.
Let subscripts~$t$ to refer to evaluation at time~$t$, and similarly for subscripts in other time-like quantities such as~$s$, $h$ and~$0$.
For example, $u_t$ is~$u$ at time~$t$ and, for some function~$f(t,u)$, $f_t$~denotes evaluation at $t$~and~$u_t$.
Define $dV_t:=v_t\,dt$ so that the integral of the oscillations
\begin{equation}
\int_{t_n}^{t_{n+1}}\z v_t\,dt=\int_{V_{t_n}}^{V_{t_{n+1}}}\z dV_t=V_{t_{n+1}}-V_{t_n}\,.
\end{equation} 
Without loss of generality, we assume that $v(t)$ and~$V(t)$ have zero mean so that $(V_{t_{n+1}}-V_{t_n})\sim\varpi^{-1}$~\footnote{This follows from equation~\eqref{eq:bound} with $f_s=1$.} (unless we also scale the strength of the oscillations~$v_t$ with the frequency~$\varpi$).
For cases where the mean $\langle v\rangle\neq 0$, for example $v(t)=e^{\cos\varpi t}$, we define $\tilde v(t):= v(t)-\langle v\rangle$ and $\tilde a(t,u):=a(t,u)+\langle v\rangle b(t,u)$ and use the tilde functions in the \ode~\eqref{eq:ode} so it retains the correct form but now has oscillations~$\tilde v(t)$ with zero mean.
The integral function~$V_t$ is analogous to the Wiener process in stochastic calculus (in such an analogy, the `vacillating'~$v_t$ would be analogous to the formal `white noise').

Herein we focus on the case when the \ode~\eqref{eq:ode} contains just one rapidly oscillating factor.
We expect the case when there are multiple rapidly oscillating factors to be similar, but more complicated to express, and leave the case for further research.
Such future research could consider some derivative-free schemes analogous to those which efficiently solve stochastic differential equations with a multidimensional Wiener process.
For example, stochastic Runge--Kutta methods for solving \sde{}s are not only efficient but also have good accuracy and stability~\cite[e.g.]{Roessler2010, Komori2012, Roberts2011b}.

\section{Iterative integration scheme}
\label{sec:iterative}

Consider the time derivative of any smooth function~$f(t,u):\RR\times\RR^m\to\RR^k$ for any~$k$, for which the variable~$u$ satisfies the rapidly oscillating \ode~\eqref{eq:ode}. 
We use the term ``smooth function'' to mean the class of functions differentiable as often as is needed for the expressions at hand (that is, restricted to a suitable Sobolev space).
Expand the time derivative of~$f(t,u)$ using the chain rule:%
\footnote{Equations~\eqref{eq:chainrule} and \eqref{eq:operators} invoke the standard inner product dot operator~``$\cdot$'': that is, $\D uf\cdot du/dt=\sum_{j=1}^m (\partial f/\partial u_j)(du_j/dt)$ and $a\cdot \partial/\partial u=\sum_{j=1}^ma_j\partial/\partial u_j$ for $a,u\in\RR^m$.
This dot product is implicit in all the operators~$\opL_t^0$ and~$\opL_t^1$.}
\begin{equation}
\frac{df}{dt}=\D tf+\D uf\cdot\frac{du}{dt}=\opL^0_tf(t,u)+v_t \opL^1_tf(t,u),\label{eq:chainrule}
\end{equation}
in terms of the two operators (analogous to those used for \sde{}s)
\begin{equation}
\opL^0_t=\left[\frac{\partial}{\partial t}+a\cdot \frac{\partial}{\partial u}\right]_t \quad\text{and}\quad
\opL^1_t=\left[b\cdot\frac{\partial}{\partial u}\right]_t.\label{eq:operators}
\end{equation}
The integral version of the chain rule~\eqref{eq:chainrule} for any smooth function~$f(t,u)$, integrated over the interval~$(t_n,t)$, is
\begin{equation}
f_{t}=f_{t_n}+\int_{t_n}^{t}\opL_s^0f_s\,ds+\int_{V_{t_n}}^{V_{t}}\opL_s^1f_s\,dV_s\,.\label{eq:CRint}
\end{equation}
We now show how successive iterations of the integral formula~\eqref{eq:CRint} lead to useful hierarchal integral expressions for the solution~$u(t)$ of the \ode~\eqref{eq:ode} at $t_{n+1}=t_n+h$\,.
The integral expression involves powers of the micro time scales~$h$ and~$\varpi^{-1}$.
Iteration of the formula~\eqref{eq:CRint} generates expressions with precise remainders for error estimation.

\paragraph{First integral approximation}

We start with the \ode~\eqref{eq:ode} integrated
over the temporal interval~$(t_n,t_{n+1})$,
\begin{align}
u_{t_{n+1}}=u_{t_n}+\int_{t_n}^{t_{n+1}}  \frac{du}{dt}\,dt
=u_{t_n}+\int_{t_n}^{t_{n+1}}\z a_t\,dt+\int_{t_n}^{t_{n+1}}\z b_t\,dV_t\,,
\end{align}
having substituted the \ode~\eqref{eq:ode} to obtain the last expression on the right-hand side.
Now invoke the formula~\eqref{eq:CRint} for the integrands of both integrals in the above equation, that is, for both $f_t=a_t$ and $f_t=b_t$\,. 
We obtain the first expansion
\begin{equation}
u_{t_{n+1}}=u_{t_n}+a_{t_n}\int_{t_n}^{t_{n+1}}\z dt+b_{t_n}\int_{V_{t_n}}^{V_{t_{n+1}}}\z dV_t
+R_{1,1}
\label{eq:first}
\end{equation}
where the remainder term is the sum of integrals
\begin{align}
R_{1,1}&:=\int_{t_n}^{t_{n+1}}\z \int_{t_n}^t\opL_s^0a_s\,ds\,dt
+\int_{t_n}^{t_{n+1}}\z \int_{V_{t_n}}^{V_t}\opL_s^1a_s\,dV_s\,dt
\nonumber\\&\quad{}
+\int_{V_{t_n}}^{V_{t_{n+1}}}\z \int_{t_n}^t\opL_s^0b_s\,ds\,dV_t
+\int_{V_{t_n}}^{V_{t_{n+1}}}\z \int_{V_{t_n}}^{V_t}\opL_s^1b_s\,dV_s\,dV_t\,.
\label{eq:firstr}
\end{align}
Upon evaluating the two integrals in formula~\eqref{eq:first} we obtain an estimate for~$u_{t_{n+1}}$, with second order errors in~$h$ and~$\varpi^{-1}$ since $(V_{t_{n+1}}-V_{t_n})=\Ord{\varpi^{-1}}$, namely
\begin{equation}
u_{t_{n+1}}=u_{t_n}+a_{t_n}h+b_{t_n}(V_{t_{n+1}}-V_{t_n})+\Ord{h^2+\varpi^{-2}}.
\label{eq:uh1st}
\end{equation}
The four remainder integrals in equation~\eqref{eq:firstr}  straightforwardly determines the order of local error in the time step~\eqref{eq:uh1st}.  
This remainder is negligible at first order in $h$~and~$\varpi^{-1}$ since each integral over a time variable provides an additional order of~$h$, and each integral over the oscillating function provides an additional order of~$\varpi^{-1}$; that is, the four neglected integrals are all of second order, or higher, in $h$~and/or~$\varpi^{-1}$, as demonstrated by Lemma~\ref{lem:1}.

%\begin{definition}[Sobolev space]
%Let $W^{k,p}(\Omega)$ denote the Sobolev space on an open set~$\Omega\subset \RR^{m+1}$ with order~$k$ and $1\leq p<\infty$. 
%The norm for~$W^{k,p}(\Omega)$, in terms of the $L^p$-norm $\|u\|_{p}=(|u_1|^p+\cdots +|u_{m}|^p)^{1/p}$ for $u\in \mathbb{R}^{m}$, is  
%\begin{equation}
%\|z\|_{W^{k,p}(\Omega)}:=\left(\sum_{|\alpha|\leq k}\|D^{\alpha}z\|^p_{p}\right)^{1/p}\label{eq:norm}
%\end{equation}
%for all $z\in W^{k,p}(\Omega)$, for multi-index $\alpha=(\alpha_0,\ldots,\alpha_{m})\in\NN_0^{m+1}$, and where 
%\begin{equation}
%D^{\alpha}z:=\frac{\partial^{|\alpha|}z}{\partial t^{\alpha_0}\partial u_1^{\alpha_1}\cdots\partial u_m^{\alpha_m}}
%\quad\text{and}\quad |\alpha|:=\alpha_0+\cdots+\alpha_m\,.
%\end{equation}
%\end{definition}
\begin{definition}
Let $C^{k}(\Omega)$ denote the space of functions into~\(\RR^m\) on an open set~$\Omega\subset \RR^{m+1}$ which are continuous up to and including $k$th~order derivatives. 
Define the norm for~$C^{k}(\Omega)$, in terms of the vector \(p\)-norm $\|u\|_p=(|u_1|^p+\cdots +|u_{m}|^p)^{1/p}$ for $u\in \mathbb{R}^{m}$, as  
\begin{equation}
\|z\|_{C_p^{k}(\Omega)}:=\left(\sum_{|\alpha|\leq k}\|D^{\alpha}z\|^p_{p}\right)^{1/p}
\label{eq:norm}
\end{equation}
for all $z\in C^{k}(\Omega)$, for multi-index $\alpha=(\alpha_0,\ldots,\alpha_{m})\in\NN_0^{m+1}$, and where 
\begin{equation*}
D^{\alpha}z:=\frac{\partial^{|\alpha|}z}{\partial t^{\alpha_0}\partial u_1^{\alpha_1}\cdots\partial u_m^{\alpha_m}}
\quad\text{and}\quad |\alpha|:=\alpha_0+\cdots+\alpha_m\,.
\end{equation*}
\end{definition}

\begin{lemma}[first error bound]\label{lem:1}
Assume there exists an open domain~$\Omega\subset\RR^{m+1}$ such that $(t,u(t))\in \Omega$ over the time interval $t_n\leq t\leq t_{n+1}$\,. If $a,b\in C^{1}(\Omega)$ are bounded by $\|a\|_{C_p^{1}(\Omega)},\|b\|_{C_p^{1}(\Omega)}\leq K$\,,
then the error of the time step~\eqref{eq:uh1st} is bounded by
\begin{equation}
\|R_{1,1}\|_p\leq \tfrac12(K^2+K)h^2 
+(2K^2+K)\|v\|h\varpi^{-1}
+K^2\|v\|^2\varpi^{-2}
\label{eq:bdd11}
\end{equation}
where $\|v\|=2\pi\max_{0\leq t<2\pi\varpi^{-1}}|v_t|$.
\end{lemma}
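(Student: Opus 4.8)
The plan is to estimate $R_{1,1}$ directly from its defining formula~\eqref{eq:firstr}. Write $R_{1,1}=I_1+I_2+I_3+I_4$ for the four iterated integrals there, in order; by the triangle inequality for $\|\cdot\|_p$ it suffices to bound each $\|I_j\|_p$ separately. Each $I_j$ is built from (a)~an integrand of the form $\opL^0_s a_s$, $\opL^1_s a_s$, $\opL^0_s b_s$ or $\opL^1_s b_s$, and (b)~one or two nested integrations, each either an ordinary time integration $\int_{t_n}^{t}(\cdot)\,ds$ or an oscillation integration $\int_{V_{t_n}}^{V_t}(\cdot)\,dV_s$. The strategy is to show that every time integration over the step contributes a factor at most~$h$, and every oscillation integration contributes a factor of order~$\|v\|\varpi^{-1}$; pairing these against the four integrands then produces the three groups of terms in~\eqref{eq:bdd11}: $I_1$ (two time integrations) gives the $h^2$ term, $I_2$ and $I_3$ (one of each kind) give the $h\varpi^{-1}$ terms, and $I_4$ (two oscillation integrations) gives the $\varpi^{-2}$ term.

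For the integrands, note that $a,b\in C^1(\Omega)$ with $\|a\|_{C_p^1(\Omega)},\|b\|_{C_p^1(\Omega)}\le K$, and that consequently $\|f\|_{C_p^1(\Omega)}\le K$ for $f\in\{a,b\}$ as well. Then from the definitions~\eqref{eq:operators}, $\opL^0 f=\D tf+a\cdot\D uf$ and $\opL^1 f=b\cdot\D uf$, so, using the product structure of the dot operator, $\|\opL^0 f\|_{C_p^0(\Omega)}\le K+K^2$ and $\|\opL^1 f\|_{C_p^0(\Omega)}\le K^2$ for $f=a$ and $f=b$. These uniform bounds are what supply the coefficients $K^2+K$ attached to the $\opL^0$ contributions and $K^2$ attached to the $\opL^1$ contributions.

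The substance of the proof is the oscillation estimate. The key point is that $v$ is periodic of period $2\pi\varpi^{-1}$ with zero mean, so its running integral $\bar V_t:=V_t-V_{t_n}=\int_{t_n}^t v_s\,ds$ is uniformly small: complete periods of $v$ integrate to zero, so $\bar V_t$ equals the integral of $v$ over a fraction of a period and hence $\sup_{t_n\le t\le t_{n+1}}|\bar V_t|\le\tfrac12\|v\|\varpi^{-1}$. In particular an oscillation integration of a \emph{constant} costs only $\tfrac12\|v\|\varpi^{-1}$, not the total variation $\int_{t_n}^{t_{n+1}}|v_s|\,ds\sim h\max|v|$ that a naive bound would give. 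The integrands of the oscillation integrations in~\eqref{eq:firstr} are not constant ($I_2$ has a $dV$-integral nested inside a $dt$-integral, $I_3$ has the $t$-dependent integrand $\int_{t_n}^t\opL^0_s b_s\,ds$ under the outer $dV$-integral, and $I_4$ is a $dV$-integral of a $dV$-integral), so for these I would integrate by parts, rewriting $(\cdot)\,dV_s$ as $(\cdot)\,d\bar V_s$ and moving the derivative onto the remaining factor; the boundary terms and the new integrand are then controlled by $\sup|\bar V_t|$ together with the $C^1$ data, because the differentiated factor is either an antiderivative --- whose $s$-derivative is again $\opL^0_s b_s$ or $\opL^1_s b_s$, bounded as above --- or a smooth composite handled by the chain rule~\eqref{eq:chainrule}. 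For the doubly-oscillatory $I_4$ the extra ingredient is the identity $\bar V_s\,dV_s=\tfrac12\,d\!\left(\bar V_s^2\right)$, which after one integration by parts genuinely produces the order $\varpi^{-2}$ rather than merely $h\varpi^{-1}$.

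Assembling the pieces: $\|I_1\|_p\le\|\opL^0 a\|_{C_p^0(\Omega)}\int_{t_n}^{t_{n+1}}\!\int_{t_n}^t ds\,dt\le\tfrac12(K^2+K)h^2$; $\|I_2\|_p\lesssim\|\opL^1 a\|_{C_p^0(\Omega)}\,h\,\|v\|\varpi^{-1}\le K^2\|v\|h\varpi^{-1}$; $\|I_3\|_p\lesssim\|\opL^0 b\|_{C_p^0(\Omega)}\,h\,\|v\|\varpi^{-1}\le(K^2+K)\|v\|h\varpi^{-1}$; and $\|I_4\|_p\lesssim\|\opL^1 b\|_{C_p^0(\Omega)}\,\|v\|^2\varpi^{-2}\le K^2\|v\|^2\varpi^{-2}$. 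Adding the four bounds and keeping track of the numerical factors from the nested integrations yields~\eqref{eq:bdd11}. The step I expect to be the main obstacle is precisely the oscillation estimate of the previous paragraph: extracting one clean power of $\|v\|\varpi^{-1}$ from each $dV$-integration when its integrand varies forces the integration-by-parts argument, and one must verify that the auxiliary terms it generates --- most delicately in $I_2$ and $I_4$ --- are no larger than the leading terms quoted above, with the squaring identity being the nonobvious device that keeps $I_4$ at second order.
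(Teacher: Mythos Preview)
Your decomposition $R_{1,1}=I_1+\cdots+I_4$ and the uniform integrand bounds $\|\opL^0_s f_s\|_p\le K+K^2$, $\|\opL^1_s f_s\|_p\le K^2$ for $f\in\{a,b\}$ coincide with the paper's. The divergence is in how the oscillation integrals are handled. The paper does \emph{not} integrate by parts: it states the single estimate
\[
\Bigl\|\int_{V_{t_n}}^{V_{t}}f_s\,dV_s\Bigr\|_{p}\le \|f\|_{p,\infty}\,\|v\|\,\varpi^{-1},
\qquad \|f\|_{p,\infty}:=\max_{t_n\le s\le t_{n+1}}\|f_s\|_p,
\]
arguing from zero-mean periodicity that $\int_{t_n}^{t}v_s\,ds$ reduces to an integral over at most one period. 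It then pulls the $K$-bounds out of each term in~\eqref{eq:firstr} and applies this inequality iteratively to the remaining scalar iterated integrals, using $f_s=1$, $f_s=s-t_n$, and $f_s=V_s-V_{t_n}$. There is no integration by parts, no squaring identity, no auxiliary boundary terms to track.

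Your route carries a genuine gap at precisely the point you flag as delicate. In
\(
I_2=\int_{t_n}^{t_{n+1}}\!\int_{V_{t_n}}^{V_t}\opL^1_s a_s\,dV_s\,dt
\),
however you arrange the integration by parts---on the inner $dV_s$-integral, or after Fubini on $\int_{t_n}^{t_{n+1}}(t_{n+1}-s)\,\opL^1_s a_s\,dV_s$---the factor that gets differentiated contains $\opL^1_s a_s$; by the chain rule~\eqref{eq:chainrule} its $s$-derivative is $\opL^0_s\opL^1_s a_s+v_s\,\opL^1_s\opL^1_s a_s$, which involves \emph{second} derivatives of $a$ and $b$ and is not controlled by the $C^1$ hypothesis of the lemma. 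The same obstruction recurs in the detailed treatment of $I_4$: after the squaring identity $\bar V_s\,dV_s=\tfrac12\,d(\bar V_s^2)$ you must still differentiate $\opL^1_s b_s$ to finish the estimate. Your phrase ``a smooth composite handled by the chain rule~\eqref{eq:chainrule}'' names the derivative but does not bound it with the available data. As written, your argument would establish~\eqref{eq:bdd11} under $a,b\in C^2(\Omega)$---the hypothesis of Lemma~\ref{lem:2}---but not under the $C^1$ assumption stated here.
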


\begin{proof}
Since $\|a\|_{C_p^{1}(\Omega)},\|b\|_{C_p^1(\Omega)}\leq K$\,, from the norm~\eqref{eq:norm} the $p$-norm of relevant derivative are bounded: $\|D^{\alpha}a\|_{p},\|D^{\alpha}b\|_{p}\leq K$ for multi-indices $|\alpha|\leq 1$. 
Now establish the bound that
\begin{equation}
\left\|\int_{V_{t_n}}^{V_{t}}f_s\,dV_s\right\|_{p}\leq \|f\|_{p}^{\infty}\|v\|\varpi^{-1},\label{eq:bound}
\end{equation}
where $\|f\|_{p,\infty}=\max_{t_n\leq s\leq t_{n+1}}\|f_s\|_p$ and $t_n\leq t\leq t_{n+1}$. 
This bound follows from
\begin{align*}
\left\|\int_{V_{t_n}}^{V_{t}}f_sdV_s\right\|_p&\leq \|f\|_{p,\infty}\left|\int_{t_n}^{t}v_s\,ds\right| \\
&= \|f\|_{p,\infty}\left|\int_{t_n}^{t_n+\tau}v_s\,ds\right|
\quad \text{for }0\leq\tau<2\pi\varpi^{-1}  \\
&\leq \|f\|_{p,\infty}\|v\|/2\pi\left|\int_{t_n}^{t_n+\tau}ds\right|\nonumber\\
&=\|f\|_{p,\infty}\|v\|\tau/2\pi \\
&\leq \|f\|_{p,\infty}\|v\|\varpi^{-1},
\end{align*}
where $p$-norms of integrals over $v_s$ are replaced with ordinary absolute values since $v_t,t\in \mathbb{R}$.

Since $\|\opL_s^0a_s\|_p,\|\opL_s^0b_s\|_p\leq (K+K^2)$ and $\|\opL_s^1a_s\|_p,\|\opL_s^1b_s\|_p\leq K^2$ within the time interval $t_n\leq t\leq t_{n+1}$, the remainder~\eqref{eq:firstr} is bounded by
\begin{align*}
\|R_{1,1}\|_p&\leq (K+K^2)\left(\left|\int_{t_n}^{t_{n+1}}\int_{t_n}^tds\,dt\right|+\left|\int_{V_{t_n}}^{V_{t_{n+1}}}\int_{t_n}^tds\,dV_t\right|\right) \\
&\quad{}+K^2\left(\left|\int_{t_n}^{t_{n+1}}\int_{V_{t_n}}^{V_t}dV_s\,dt\right|+\left|\int_{V_{t_n}}^{V_{t_{n+1}}}\int_{V_{t_n}}^{V_t}dV_s\,dV_t\right|\right).
\end{align*}
All the \(t\)~integrals are evaluated exactly and equation~\eqref{eq:bound} provides upper bounds for all $V_t$~integrals, with $\|t-t_n\|_{p,\infty}=h$ and $\|1\|_{p,\infty}=1$, to obtain the bound~\eqref{eq:bdd11} on the remainder~\eqref{eq:firstr}.
\end{proof}

\paragraph{Second integral approximation}

To estimate~$u_{t_n}$ to third order errors in $h$~and~$\varpi^{-1}$ we expand equation~\eqref{eq:first} further by applying formula~\eqref{eq:CRint} to the integrands~$\opL_s^0a_s$, $\opL_s^1a_a$, $\opL_s^0b_s$ and~$\opL_s^1b_s$ in the remainder~\eqref{eq:firstr}:
\begin{align}
u_{t_{n+1}}&=u_{t_n}+a_{t_n}\int_{t_n}^{t_{n+1}}\z dt+b_{t_n}\int_{V_{t_n}}^{V_{t_{n+1}}}\z dV_t+\opL_{t_n}^0a_{t_n}\int_{t_n}^{t_{n+1}}\z\int_{t_n}^tds\,dt
\nonumber\\&\quad
{}+\opL_{t_n}^1a_{t_n}\int_{t_n}^{t_{n+1}}\z \int_{V_{t_n}}^{V_t}dV_s\,dt+\opL_{t_n}^0b_{t_n}\int_{V_{t_n}}^{V_{t_{n+1}}}\z \int_{t_n}^tds\,dV_t
\nonumber\\&\quad
{}+\opL_{t_n}^1b_{t_n}\int_{V_{t_n}}^{V_{t_{n+1}}}\z \int_{V_{t_n}}^{V_t}dV_s\,dV_t
+R_{2,2}\,,
\label{eq:2nd}
\end{align}
where the new remainder is the sum of eight integrals, namely
\begin{align}
R_{2,2}&:=
\int_{t_n}^{t_{n+1}}\z \int_{t_n}^t\int_{t_n}^s\opL_r^0\opL_r^0a_r\,dr\,ds\,dt
+\int_{t_n}^{t_{n+1}}\z \int_{t_n}^t\int_{V_{t_n}}^{V_s}\opL_r^1\opL_r^0a_r\,dV_r\,ds\,dt
\nonumber\\&\quad
{}+\int_{t_n}^{t_{n+1}}\z \int_{V_{t_n}}^{V_t}\int_{t_n}^{s}\opL_r^0\opL_r^1a_r\,dr\,dV_s\,dt
+\int_{t_n}^{t_{n+1}}\z \int_{V_{t_n}}^{V_t}\int_{V_{t_n}}^{V_s}\opL_r^1\opL_r^1a_r\,dV_r\,dV_s\,dt
\nonumber\\&\quad
{}+\int_{V_{t_n}}^{V_{t_{n+1}}}\z \int_{t_n}^t\int_{t_n}^s\opL_r^0\opL_r^0b_r\,dr\,ds\,dV_t
+\int_{V_{t_n}}^{V_{t_{n+1}}}\z \int_{t_n}^t\int_{V_{t_n}}^{V_s}\opL_r^1\opL_r^0b_r\,dV_r\,ds\,dV_t
\nonumber\\&\quad
{}+\int_{V_{t_n}}^{V_{t_{n+1}}}\z \int_{V_{t_n}}^{V_t}\int_{t_n}^{s}\opL_r^0\opL_r^1b_r\,dr\,dV_s\,dV_t
+\int_{V_{t_n}}^{V_{t_{n+1}}}\z \int_{V_{t_n}}^{V_t}\int_{V_{t_n}}^{V_s}\opL_r^1\opL_r^1b_r\,dV_r\,dV_s\,dV_t\,.
\label{eq:2ndr}
\end{align}
We expect the six integrals in the time step~\eqref{eq:2nd} to be evaluated straightforwardly using the known properties of~$V_t$.
Then the eight integrals in the remainder~\eqref{eq:2ndr} provide the error when an estimate of~$u_{t_{n+1}}$ is required to third order errors in $h$~and~$\varpi^{-1}$, as demonstrated by Lemma~\ref{lem:2}.

\begin{lemma}[second error bound]\label{lem:2}
%Assume in the time interval $t_n\leq t\leq t_{n+1}$  that $a,b\in C_p^2(\Omega)$ where $W^{k,p}$ is the Sobolev space in $\Omega$ with order $k=2$, $\Omega$ is an open subset in $\mathbb{R}^m$, $1\leq p<\infty$, and 
%the norm for $W^{k,p}(\Omega)$ is given in equation (\ref{eq:norm}).
%If for $z=a,b$ the norm for $C_p^2(\Omega)$ is bounded by $K$,
Assume there exists an open domain~$\Omega\subset\RR^{m+1}$ such that $(t,u(t))\in \Omega$ over the time interval $t_n\leq t\leq t_{n+1}$\,. 
If $a,b\in C^2(\Omega)$ are bounded by $\|a\|_{C_p^2(\Omega)},\|b\|_{C_p^2(\Omega)}\leq K$\,,
then the error of the time step~\eqref{eq:2nd} is bounded by
\begin{align}
\|R_{2,2}\|_p&\leq \tfrac16(2K^3+4K^2+K)h^3
+\tfrac12(8K^3+8K^2+K)\|v\|h^2\varpi^{-1} \nonumber\\
&\quad{}+(6K^3+4K^2)\|v\|^2h\varpi^{-2}
+2K^3\|v\|^3\varpi^{-3}.
\label{eq:bdd22}
\end{align}
\end{lemma}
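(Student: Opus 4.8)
The plan is to reuse, one iteration higher, the argument that proved Lemma~\ref{lem:1}. As there, the only facts about $a$ and~$b$ that enter are that $\|D^\alpha a\|_p,\|D^\alpha b\|_p\le K$ for every multi-index with $|\alpha|\le2$, which is immediate from the hypothesis and the definition~\eqref{eq:norm} of $\|\cdot\|_{C_p^2(\Omega)}$.

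The first task is to bound, uniformly over $t_n\le s\le t_{n+1}$, the eight twice-differentiated integrands $\opL_s^i\opL_s^j a_s$ and $\opL_s^i\opL_s^j b_s$ with $i,j\in\{0,1\}$ that occur in the remainder~\eqref{eq:2ndr}. Substituting the definitions~\eqref{eq:operators} and expanding by the product rule --- each of $\opL^0=\partial_t+a\cdot\partial_u$ and $\opL^1=b\cdot\partial_u$ being a first-order differential operator, hence a derivation --- writes each such integrand as a sum of terms, every term a product of at most three factors taken from $\{a,\,b,\,D^\alpha a,\,D^\alpha b:|\alpha|\le2\}$ and therefore bounded, exactly as the first-order operators were bounded in the proof of Lemma~\ref{lem:1}, by the corresponding power of~$K$; adding the terms gives a cubic polynomial in~$K$ for each. (Here $\opL^1\opL^0\ne\opL^0\opL^1$ in general, but the two are expanded and bounded in the same way.) This is the step that produces the explicit coefficients $2K^3+4K^2+K$, $8K^3+8K^2+K$, $6K^3+4K^2$ and $2K^3$ in~\eqref{eq:bdd22}, and it is the only part of the proof with any real bookkeeping; I expect it to be the main obstacle, since a miscount of the product-rule terms would corrupt those constants.

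With those eight bounds in hand, treat each of the eight triple integrals in~\eqref{eq:2ndr} as in Lemma~\ref{lem:1}: factor out the supremum bound of its integrand, evaluate every $dt$-integral exactly --- a chain of $k$ nested $\int_{t_n}^{\cdot}$ $t$-integrals over $t_n\le\cdots\le t_{n+1}$ contributes $h^k/k!$ --- and bound every $dV$-integral by one application of the estimate~\eqref{eq:bound}, contributing a factor $\|v\|\varpi^{-1}$ apiece. Working from the innermost integral outwards keeps the mixed cases straight: a $dt$-integral nested inside a $dV$-integral leaves a factor $(t-t_n)$ which is replaced by its supremum~$h$ before \eqref{eq:bound} is applied to the surrounding $dV$-integral, and the all-$dV$ term (the last in~\eqref{eq:2ndr}) just needs \eqref{eq:bound} three times in succession. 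Finally sum the eight resulting bounds and collect by total order: the one all-$dt$ integral yields the $h^3$ term, the three integrals with two $dt$'s and one $dV$ yield the $h^2\varpi^{-1}$ term, the three with one $dt$ and two $dV$'s yield the $h\varpi^{-2}$ term, and the all-$dV$ integral yields the $\varpi^{-3}$ term, giving~\eqref{eq:bdd22}. No analytic ingredient beyond Lemma~\ref{lem:1} is needed.
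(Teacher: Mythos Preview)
Your proposal is correct and follows essentially the same route as the paper's (outlined) proof: bound each of the double-operator integrands $\opL_s^i\opL_s^j f_s$ by the appropriate cubic in~$K$ via the product rule and the hypothesis $\|D^\alpha a\|_p,\|D^\alpha b\|_p\le K$ for $|\alpha|\le2$, then handle each triple integral exactly as in Lemma~\ref{lem:1}, evaluating $dt$-layers explicitly and applying~\eqref{eq:bound} once per $dV$-layer with $\|(t-t_n)^l\|_{p,\infty}=h^l$. The paper records the four operator bounds explicitly as $\|\opL^0\opL^0 f\|_p\le K+4K^2+2K^3$, $\|\opL^1\opL^0 f\|_p\le K^2+2K^3$, $\|\opL^0\opL^1 f\|_p\le 2K^2+2K^3$, $\|\opL^1\opL^1 f\|_p\le 2K^3$, which is precisely the ``bookkeeping'' step you flag.
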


\begin{proof}[Outline of proof]
Since $\|a\|_{C_p^2(\Omega)},\|b\|_{C_p^2(\Omega)}\leq K$\,, from equation~\eqref{eq:norm} the $p$-norms of derivatives are bounded: $\|D^{\alpha}a\|_{p},\|D^{\alpha}b\|_{p}\leq K$ for multi-indices $|\alpha|\leq 2$\,.
Substitute $\|\opL_r^0\opL_r^0f_r\|_p\leq (K+4K^2+2K^3)$, $\|\opL_r^1\opL_r^0f_r\|_p\leq(K^2+2K^3)$, $\|\opL_r^0\opL_r^1f_r\|_p\leq (2K^2+2K^3)$ and $\|\opL_r^1\opL_r^1f_r\|_p\leq 2K^3$ into the remainder~\eqref{eq:2ndr}, where function~$f_r$ is either \(a_r\) or~\(b_r\). 
Solve all temporal integrals exactly and use equation~\eqref{eq:bound} for upper bounds of all $V_t$~integrals, using $\|(t-t_n)^l\|_{p,\infty}=h^l$ for non-negative integer~$l\in\NN_0$. 
%\begin{align}
%|R_{2,2}|&\leq(K+3K^2+2K^3)\left(\left|\int_{t_n}^{t_{n+1}}\int_{t_n}^t\int_{t_n}^sdrdsdt\right|\right.\\
%&\quad{}\left.+\left|\int_{V_{t_n}}^{V_{t_{n+1}}}\int_{t_n}^t\int_{t_n}^sdrdsdV_t\right|\right)\nonumber\\
%&\quad{}+(K^2+2K^3)\left(\left|\int_{t_n}^{t_{n+1}}\int_{t_n}^t\int_{V_{t_n}}^{V_s}dV_rdsdt\right|+\left|\int_{t_n}^{t_{n+1}}\int_{V_{t_n}}^{V_t}\int_{t_n}^sdrdV_sdt\right|\right)\nonumber\\
%&\quad{}+(2K^2+2K^3)\left(\left|\int_{V_{t_n}}^{V_{t_{n+1}}}\int_{t_n}^{t}\int_{V_{t_n}}^{V_s}dV_rdsdV_t\right|+\left|\int_{V_{t_n}}^{V_{t_{n+1}}}\int_{V_{t_n}}^{V_t}\int_{t_n}^sdrdV_sdV_t\right|\right)\nonumber\\
%&\quad{}+2K^3\left(\left|\int_{t_n}^{t_{n+1}}\int_{V_{t_n}}^{V_t}\int_{V_{t_n}}^{V_s}dV_rdV_sdt\right|+\left|\int_{V_{t_n}}^{V_{t_{n+1}}}\int_{V_{t_n}}^{V_t}\int_{V_{t_n}}^{V_s}dV_rdV_sdV_t\right|\right)\nonumber
%\end{align}
\end{proof}

\paragraph{Further integral approximations}

When higher orders of $h$~and~$\varpi^{-1}$ are required, one would continue expanding integrands using formula~\eqref{eq:CRint} until the desired order is reached.
When all terms containing $\kappa$ or fewer integrals are retained for the evaluation of $u_{t_{n+1}}$, then the remainder, denoted~$R_{\kappa,\kappa}$, contains all neglected integrals and so consists of terms containing $\kappa+1$~integrals. 
However, this expansion assumes we weight $h$ and~$\varpi^{-1}$ of equal importance in the expansion. 
In general, we truncate the expansions of the integrals in $h$~and~$\varpi^{-1}$ at different orders since $h$~and~$\varpi^{-1}$ need not be of a equal importance. 

Consider the regime where the microscale oscillation time $\varpi^{-1}\sim h^{\rho}$ for some real exponent $\rho>0$\,.
In this regime, suppose we wish to estimate~$u_{t_{n+1}}$ correct to~$\Ord{h^{\kappa}}$. 
Since  each integral over~$t$ adds order~$h$ and each integral over~$V_t$ adds order $\varpi^{-1}\sim h^{\rho}$, each retained term in the integral estimate of~$u_{t_{n+1}}$ must be composed of $q_0$~integrals over~$t$ and $q_1$ over~$V_t$ such that $q_0+q_1\rho\leq \kappa$. 
The error of such an estimate is the sum of the neglected integrals and is represented by the remainder~$R_{\kappa,\kappa/\rho}$. 
In general, we define the remainder~$R_{\kappa_0,\kappa_1}$ as the sum of the remaining integrals after the recursive integral expansion sufficient and necessary to estimate~$u_{t_{n+1}}$ so that all integrals with $q_0$~integrals over~$t$ and $q_1$ over~$V_t$ such that $q_0/\kappa_0+q_1/\kappa_1\leq 1$ have constant integrand (for general \(a\) and~\(b\)), as in equation~\eqref{eq:general}. 
%to errors~$\ord{h^{\kappa_0}+\varpi^{-\kappa_1}}$.
%Consequently, the integral expansion for the estimate of~\(u_{t_{n+1}}\) contains terms each composed of $q_0$~integrals over~$t$ and $q_1$~integrals over~$V_t$ such that $q_0/\kappa_0+q_1/\kappa_1\leq 1$.
The orders \(\kappa_0\) and~\(\kappa_1\) are chosen to suit the regime of application of the scheme.

\begin{proposition}[order of error]\label{lem:3}
Assume there exists an open domain~$\Omega\subset\RR^{m+1}$ such that $(t,u(t))\in \Omega$ over the time interval $t_n\leq t\leq t_{n+1}$\,. 
If $a,b\in C^{\max(\kappa_0,\kappa_1)}(\Omega)$ are bounded,
then the estimate~$u_{t_{n+1}}$ has error \(R_{\kappa_0,\kappa_1}=\ord{h^{\kappa_0}+\varpi^{-\kappa_1}}\).
%If the microscale oscillation time $\varpi^{-1}\sim h^{\rho}$ for some real exponent $\rho>0$\,, then the estimate~$u_{t_{n+1}}$, correct to~$\Ord{h^{\kappa}}$, has error $R_{\kappa,\kappa/\rho}=\Ord{h^{\varepsilon}}$ with $\varepsilon=\min_{m=0,1,\ldots,\lfloor\kappa/\rho +1\rfloor}(\lfloor \kappa+1-m\rho\rfloor+m\rho)$.
\end{proposition}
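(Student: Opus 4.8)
The plan is to generalise the mechanism behind Lemmas~\ref{lem:1} and~\ref{lem:2}. First I would organise the iterated expansion combinatorially. Each term generated by repeatedly applying the chain-rule identity~\eqref{eq:CRint} is labelled by a word $w=w_1w_2\cdots w_\ell\in\{0,1\}^\ell$ that records, reading from the outermost nested integral inwards, whether each integral is over~$t$ (letter~$0$) or over~$V_t$ (letter~$1$); the ``core'' function is $a$ if $w_1=0$ and $b$ if $w_1=1$, and the integrand of the associated iterated integral is $\opL_s^{w_\ell}\opL_s^{w_{\ell-1}}\cdots\opL_s^{w_2}$ applied to that core function. Writing $q_0(w)$ and $q_1(w)$ for the numbers of $0$s and $1$s in~$w$, the expansion rule is: a term with non-constant integrand and word~$w$ is expanded once more via~\eqref{eq:CRint} precisely when $q_0(w)/\kappa_0+q_1(w)/\kappa_1\le1$, in which case its frozen part contributes a retained term $\opL_{t_n}^{w_\ell}\cdots\opL_{t_n}^{w_2}f_{t_n}$ times an explicit iterated integral of unity while its two offspring carry words $w0$ and $w1$; a term whose word fails this inequality is left unexpanded and enters $R_{\kappa_0,\kappa_1}$. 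One checks this reproduces exactly the remainders~\eqref{eq:firstr} and~\eqref{eq:2ndr} for $(\kappa_0,\kappa_1)=(1,1)$ and $(2,2)$, and matches the description preceding~\eqref{eq:general}.

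The key structural observation is a bound on the depth of the remainder integrals. If $w$ labels a remainder term, its length-$(\ell-1)$ prefix was expanded, so $q_0(w_1\cdots w_{\ell-1})/\kappa_0+q_1(w_1\cdots w_{\ell-1})/\kappa_1\le1$; maximising $q_0+q_1$ over the triangle $\{q_0/\kappa_0+q_1/\kappa_1\le1,\ q_0,q_1\ge0\}$ gives $q_0+q_1\le\max(\kappa_0,\kappa_1)$, hence $\ell\le\max(\kappa_0,\kappa_1)+1$. Consequently every remainder integrand is a composition of at most $\max(\kappa_0,\kappa_1)$ of the first-order operators~\eqref{eq:operators} applied to $a$ or~$b$, i.e.\ a polynomial in $a$, $b$ and their partial derivatives of order at most $\max(\kappa_0,\kappa_1)$; under the hypothesis that $a,b\in C^{\max(\kappa_0,\kappa_1)}(\Omega)$ are bounded, each such integrand is bounded on~$\Omega$ by some constant.

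Next I would bound a single remainder term exactly as in the proofs of Lemmas~\ref{lem:1} and~\ref{lem:2}: peel off the $q_1(w)$ integrals over~$V_t$ one at a time using the estimate~\eqref{eq:bound}, each contributing a factor $\|v\|\varpi^{-1}$, and evaluate the remaining nested integrals over~$t$, which contribute at most $h^{q_0(w)}$. This yields $\|R_w\|_p\le C_w\,h^{q_0(w)}\varpi^{-q_1(w)}$ for a constant $C_w$ independent of $h$ and~$\varpi^{-1}$, where $R_w$ denotes the term indexed by~$w$. To finish, fix a remainder word~$w$ and set $A=h^{\kappa_0}$, $B=\varpi^{-\kappa_1}$, $\theta=q_0(w)/\kappa_0$, $\phi=q_1(w)/\kappa_1$, so that $h^{q_0(w)}\varpi^{-q_1(w)}=A^{\theta}B^{\phi}$ with $\theta,\phi\ge0$ and $\theta+\phi>1$ (the defining property of a remainder word). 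Since $A^{\theta}B^{\phi}\le\max(A,B)^{\theta+\phi}$ and $A+B\ge\max(A,B)$,
\[
\frac{h^{q_0(w)}\varpi^{-q_1(w)}}{h^{\kappa_0}+\varpi^{-\kappa_1}}\le\max(A,B)^{\theta+\phi-1}\to0
\quad\text{as }h,\varpi^{-1}\to0,
\]
because $\theta+\phi-1>0$. Hence each $R_w=\ord{h^{\kappa_0}+\varpi^{-\kappa_1}}$, and summing over the finitely many remainder words gives $R_{\kappa_0,\kappa_1}=\ord{h^{\kappa_0}+\varpi^{-\kappa_1}}$.

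I expect the main obstacle to be the first step: formalising the recursive tree of iterated integrals, pinning down the precise criterion separating retained terms from the remainder so that it agrees with~\eqref{eq:general}, and establishing the uniform depth bound $\ell\le\max(\kappa_0,\kappa_1)+1$ — which is exactly what makes the regularity hypothesis $a,b\in C^{\max(\kappa_0,\kappa_1)}(\Omega)$ the natural one. Once this bookkeeping is in place, the analytic estimates are routine repetitions of those in Lemmas~\ref{lem:1} and~\ref{lem:2}, and the concluding limit is the elementary inequality above.
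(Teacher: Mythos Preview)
Your proposal is correct and follows essentially the same route as the paper's outline: characterise the remainder as a finite sum of iterated integrals indexed by $(p_0,p_1)$ with $p_0/\kappa_0+p_1/\kappa_1>1$, bound each such term by $\Ord{h^{p_0}\varpi^{-p_1}}$ via the mechanism of Lemmas~\ref{lem:1}--\ref{lem:2}, and then conclude each is $\ord{h^{\kappa_0}+\varpi^{-\kappa_1}}$.

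Two places where your treatment is more complete than the paper's are worth noting. First, you make explicit the depth bound $\ell\le\max(\kappa_0,\kappa_1)+1$ on remainder words, which is exactly what makes the hypothesis $a,b\in C^{\max(\kappa_0,\kappa_1)}(\Omega)$ the right one; the paper assumes this regularity but never says why it suffices. Second, for the final step you use the direct inequality $A^{\theta}B^{\phi}/(A+B)\le\max(A,B)^{\theta+\phi-1}\to0$, which establishes the little-o uniformly as $(h,\varpi^{-1})\to(0,0)$; the paper instead scales $h=c_1\varepsilon^{1/\kappa_0}$, $\varpi^{-1}=c_2\varepsilon^{1/\kappa_1}$ and lets $\varepsilon\to0$, which as written only delivers the limit along such one-parameter curves rather than in the full two-variable sense. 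Your argument is therefore a genuine tightening of the paper's sketch, not a departure from it.
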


\begin{proof}[Outline of proof]
Expand the integrals for~$u_{t_{n+1}}$ so that all integrals with $q_0$~integrals over~$t$ and $q_1$ over~$V_t$ such that $q_0/\kappa_0+q_1/\kappa_1\leq 1$ have constant integrand. 
Then the error, the remainder~$R_{\kappa_0,\kappa_1}$, must be the sum of terms with $p_0$~integrals over~$t$ and $p_1$~integrals over~$V_t$ such that $p_0/\kappa_0+p_1/\kappa_1> 1$\,. 
By bounding the \(p_0+p_1\)~integrals in any such term, the term in the remainder is~\Ord{h^{p_0}\varpi^{-p_1}}.  
Scaling \(h=c_1\varepsilon^{1/\kappa_0}\) and \(\varpi^{-1}=c_2\varepsilon^{1/\kappa_1}\) as \(\varepsilon\to0\) we find terms \(\Ord{h^{p_0}\varpi^{-p_1}}=\ord{h^{\kappa_0}+\varpi^{-\kappa_1}}\) given $p_0/\kappa_0+p_1/\kappa_1> 1$\,. 
Consequently, the remainder \(R_{\kappa_0,\kappa_1}=\ord{h^{\kappa_0}+\varpi^{-\kappa_1}}\).
%If the order of the error $R_{\kappa,\kappa/\rho}$ is defined as $\Ord{h^{\varepsilon}}$ then $\varepsilon=\min(p_0+p_1\rho)> \kappa$ for integer~$p_0$ and~$p_1$. 
%If this minima is obtain when $p_1=m$, then $p_0=\lfloor\kappa-m\rho+1\rfloor$. 
%The possible range of $m$ is $(0,1,\ldots,\lfloor\kappa/\rho+1\rfloor)$.
%Therefore, $\varepsilon=\min_{m=0,1,\ldots,\lfloor\kappa/\rho +1\rfloor}(\lfloor \kappa+1-m\rho\rfloor+m\rho)$.
\end{proof}

As an example of Proposition~\ref{lem:3}, Lemma~\ref{lem:1} proves that~$R_{1,1}$ is~$\Ord{h^2+\varpi^{-2}}=\ord{h^1+\varpi^{-1}}$.
Similarly, from Lemma~\ref{lem:2}, $R_{2,2}$ is~$\Ord{h^3+\varpi^{-3}}=\ord{h^2+\varpi^{-2}}$, consistent with Proposition~\ref{lem:3}. 
Proposition~\ref{lem:3} is flexible because the exponents \(\kappa_0\) and~\(\kappa_1\) need not be identical, nor need be integer. 

An integral expansion for~$u_{t_{n+1}}$ is a useful estimate for~$u_{t_{n+1}}$ provided the integral remainder terms are usefully small: typically this will be for the regime $h,\varpi^{-1}\ll 1$\,.
Therefore, although we emphasise the case where $h\varpi\gg 1$, since this reflects the rapidly oscillating problem described by \ode{}~\eqref{eq:ode}, the approach is not constrained to this regime.
For some exponent $\rho>1$\,, the regime $\varpi^{-1}\sim h^{\rho}$ implies $h\varpi>1$, with more rapid oscillators associated with larger exponents~$\rho$.
However, the case of exponent $0<\rho<1$, resulting in $h\varpi<1$, is also valid and is analogous to \sde{}s, as discussed in Section~\ref{sec:wiener}.
%For an integral expansion correct to~$\Ord{h^{\kappa}}$, the order of error~$\Ord{h^{\varepsilon}}$, as defined in Lemma~\ref{lem:3},  is correct for any exponent $\rho>0$.

The integral expansion for~$u_{t_{n+1}}$ to errors~$\ord{h^{\kappa_0}+\varpi^{-\kappa_1}}$, in terms of solvable integrals, is compactly written as
\begin{align}
%u_{t_{n+1}}&=u_{t_n}+\sum_{q_0=0}^{(\kappa-1)}\sum_{q_1=0}^{(\kappa-q_0-1)/\rho}\int_{t_n}^{t_{n+1}}\z \left(\int_{\mathcal{D}} d\vec{\tau}_{q_0+q_1}\right)dt\, a_{t_n}
%\nonumber\\&\quad{}
%+\sum_{q_1=0}^{(\kappa/\rho-1)}\sum_{q_0=0}^{[\kappa-\rho(q_1+1)]}\int_{V_{t_n}}^{V_{t_{n+1}}}\z \left(\int_{\mathcal{D}} d\vec{\tau}_{q_0+q_1}\right)dV_t\, b_{t_n}+R_{\kappa,\kappa/\rho},
u_{t_{n+1}}&=u_{t_n}+\sum_{(q_0+1)/\kappa_0+q_1/\kappa_1\leq1}
\int_{t_n}^{t_{n+1}}\z \left(\int_{\mathcal{D}} d\vec{\tau}_{q_0+q_1}\right)dt\, a_{t_n}
\nonumber\\&\quad{}
+\sum_{q_0/\kappa_0+(q_1+1)/\kappa_1\leq1}
\int_{V_{t_n}}^{V_{t_{n+1}}}\z \left(\int_{\mathcal{D}} d\vec{\tau}_{q_0+q_1}\right)dV_t\, b_{t_n}+R_{\kappa_0,\kappa_1},
\label{eq:general}
\end{align}
where $d\vec{\tau}_{q_0+q_1}:=\{d\tau_1\ldots d\tau_{q_0+q_1}\}$ with
\begin{equation}
d\tau_j=
\begin{cases}
dt\,\opL_{t_n}^0, \quad 1\leq j\leq q_0, \\
dV_{t}\,\opL_{t_n}^1,\quad q_0+1\leq j\leq q_0+q_1.
\end{cases}
\end{equation}
Here, $\{d\tau_1\ldots d\tau_{q_0+q_1}\}$ represents all unique permutations of the~$d\tau_j$. 
For example, when $q_0=1$ and $q_1=2$ there are three unique permutations, 
\begin{align}
d\vec{\tau}_{1+2}&=\{dt\,\opL_{t_n}^0\,dV_{t}\,\opL_{t_n}^1\,dV_{t}\,\opL_{t_n}^1\}\\
&=dt\,\opL_{t_n}^0\,dV_{t}\,\opL_{t_n}^1\,dV_{t}\,\opL_{t_n}^1+dV_{t}\,\opL_{t_n}^1\,dt\,\opL_{t_n}^0\,dV_{t}\,\opL_{t_n}^1+dV_{t}\,\opL_{t_n}^1\,dV_{t}\,\opL_{t_n}^1\,dt\,\opL_{t_n}^0\nonumber\\
&=dt\,dV_t\,dV_t\,\opL_{t_n}^0\opL_{t_n}^1\opL_{t_n}^1+dV_t\,dt\,dV_t\,\opL_{t_n}^1\opL_{t_n}^0\opL_{t_n}^1
+dV_t\,dV_t\,dt\,\opL_{t_n}^1\opL_{t_n}^1\opL_{t_n}^0.\nonumber
\end{align}
The domain~$\mathcal{D}$ of each integration requires the domain of the integrals over $t$~and~$V_{t}$ to be $[t_n,t]$~and~$[V_{t_n},V_t]$, respectively.
The integrals appearing explicitly in equation~\eqref{eq:general} are straightforwardly evaluated once the microscale oscillation~$v_t$ is specified. 
Figure~\ref{fig:tree} shows a tree diagram representation of equation~\eqref{eq:general} and illustrates some possible choices of exponents~$\kappa$ and~$\rho$ depending upon desired order of accuracy and the relative magnitude of the time step~$h$ and the microscale oscillation time~$\varpi^{-1}$.
Equation~\eqref{eq:general} is essentially the Taylor series expansion of~$u_{t_{n+1}}$ about~$u_{t_n}$ in powers of~$h$ and~$\varpi^{-1}$ so we expect it to be usefully accurate when $h,\varpi^{-1}\ll 1$.

\begin{figure}
\centering
\includegraphics[scale=0.8]{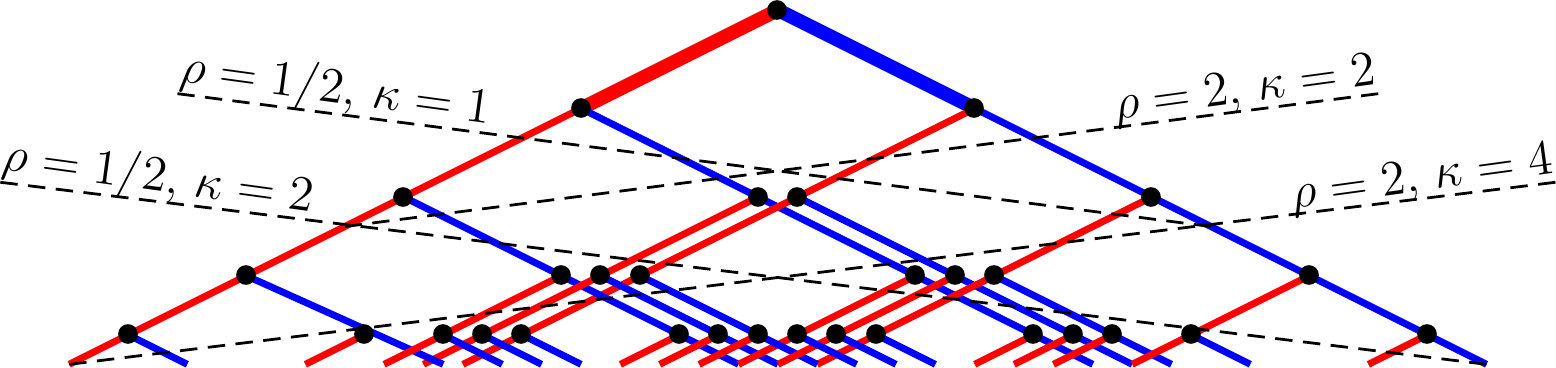}
\caption{A tree diagram of the iterative integration scheme of equation~\eqref{eq:general} where each branch indicates an additional iteration.
Each node represents one term in equation~\eqref{eq:general}, with the root of the tree representing the leading term~$u_{t_n}$.
%Each branch between a node and the top of the tree contributes one integral to the term represented by that node.
Each red branch indicates an integral over time~$t$ and each blue branch indicate an integral over~$V_t$:
the thick red branch is $\int_{t_n}^t dt\, a_{t_n}$\,; the thick blue branch is $\int_{V_{t_n}}^{V_t}dV_t\, b_{t_n}$\,; the thin red branches are $\int_{t_n}^t\opL_t^0dt$\,; and the thin blue branches are $\int_{V_{t_n}}^{V_t}\opL_t^1dV_t$\,.
The dashed lines indicate truncations of the expansion with errors~$R_{\kappa,\kappa/\rho}=\Ord{h^\kappa}$ for regimes  $\varpi^{-1}\sim h^{\rho}$ for four examples of the exponents~$\rho$ and order~$\kappa$; the nodes above the dashed line represent the retained terms for each case.}
\label{fig:tree}
\end{figure}

%The integral expansion in $h$~and~$\varpi^{-1}$ shown in equation~\eqref{eq:general} is applicable to the \ode{}~\eqref{eq:ode}, provided $h,\varpi^{-1}\ll 1$ where we assume the coefficient functions $a$ and~$b$ vary on the scale of one.
%This constraint covers both the case of rapid oscillations, with $\varpi^{-1}\ll h\ll1$\,, and that of a Weiner process, requiring $h\ll \varpi^{-1}\ll1$~\cite{Kloeden1992,Higham2001}.
%If  $\varpi^{-1}\sim h^p$, then the former requires $p>1$\,, whereas the latter requires $p<1$\,. 
The error of the estimate~\eqref{eq:general} is the local error rather than global error as it is only for one time step of size~$h$ from~$t_n$ to~$t_{n+1}$.
The order of global error, that is, the error over many time steps, is a factor of~$h$ less than the local error, provided certain continuity rules are satisfied.
Specifically, $a(t,u)$, $b(t,u)$ and their derivatives which appear in equation~\eqref{eq:general} must be continuous in~$t$ and Lipschitz continuous in~$u$.
In addition, $v(t)$~must be continuous in~$t$. 
By assuming the \ode~\eqref{eq:ode} has a unique solution we have,  by the Picard--Lindel\"of theorem, already assumed these continuity rules.

For all but the simplest oscillating functions~$v_t$, a potentially computationally expensive part of equation~\eqref{eq:general} are the integrals over~$t$ and~$V_t$, not necessarily the operation of~$\opL^{0,1}$ on~$a_t$ and~$b_t$ (which are simply derivatives).  
One of the main advantages of this iterative integration scheme is that, for a given oscillating function~$v_t$,
once all required integrals over~$V_t$ and~$t$ are evaluated, equation~\eqref{eq:general} is readily computed for any in the family of \ode{}~\eqref{eq:ode} which have different~$a_t$ and~$b_t$, but the same~$v_t$.
Table~\ref{tab:ints} shows the required integrals for the common oscillating functions $v_{1t}=e^{i\varpi t+\phi}$, $v_{2t}=\cos(\varpi t+\phi)$ and $v_{3t}=\sin(\varpi t+\phi)$, with some arbitrary phase~$\phi$. 

\begin{table}
\caption{Indefinite integrals appearing in the time steps \eqref{eq:first}, \eqref{eq:2nd} and~\eqref{eq:general} for three common oscillating functions $v_{1t}=e^{i\varpi t+\phi}$, $v_{2t}=\cos(\varpi t+\phi)$ and $v_{3t}=\sin(\varpi t+\phi)$.
Here, $p$ and~$m$ are non-negative integers, 
but~$K^p_0$ and~$I^p_0$ are always replaced with~$J^p$ when they arise in the integral reduction. 
Negative~$p$ or~$m$ may appear in the integral reduction, in which case, $I^p_m,K^p_m,L^p_m=0$ for negative~$p$ or~$m$.
For oscillating function~$v_{1t}$ the integrals~$I^p_m$ and~$J^p$ are required, whereas for oscillating functions $v_t=v_{2t}$ or $v_t=v_{3t}$ the integrals~$J^p$, $K^p_m$ and~$L^p_m$ are required along with the identity $v_{3t}^{2m}=(1-v_{2t}^2)^m$.
}
\label{tab:ints}
\begin{equation*}
\renewcommand\arraystretch{1.2}
\begin{array}{ll}
\text{integral} & \text{integral reduction}\\
\hline
I^p_{m}=\int t^pv_{1t}^m\,dt& -i\tfrac{1}{m}\varpi^{-1}t^pv_{1t}^m+i\varpi^{-1}\tfrac{p}{m}I^{p-1}_{m}\\
J^p=\int t^p\,dt & \tfrac{1}{p+1}t^{p+1}\\
K^p_{m}=\int t^p v_{2t}^m\,dt & \varpi^{-1}\tfrac{1}{m}t^pv_{2t}^{m-1}v_{3t}-\varpi^{-1}\tfrac{p}{m}L^{p-1}_{m-1}+\tfrac{m-1}{m}K^p_{m-2}\\
L^p_{m}=\int t^pv_{2t}^mv_{3t}\,dt & -\varpi^{-1}\tfrac{1}{m+1}t^pv_{2t}^{m+1}+\varpi^{-1}\tfrac{p}{m+1}K^{p-1}_{m+1}
\end{array}
\end{equation*}
\end{table}

As a low order example of equation~\eqref{eq:general}, consider a case where the oscillation~$v_t$ varies rapidly over the interval~$(t_n,t_{n+1})$ such that $\varpi^{-1}\ll h\ll1$\,.
For illustrative purposes, we choose the regime \(\varpi^{-1}\sim h^2\) and truncate to errors $R_{2,1}=\Ord{h^3}$ (exponents $\rho=2$ and $\kappa=2$ in Figure~\ref{fig:tree}). 
Then the integral expansion~\eqref{eq:general} reduces to 
\begin{align}
u_{t_{n+1}}&=u_{t_n}+a_{t_n}\int_{t_n}^{t_{n+1}}\z dt+\opL_{t_n}^0a_{t_n}\int_{t_n}^{t_{n+1}}\z \int_{t_n}^tdt\,dt+b_{t_n}\int_{V_{t_n}}^{V_{t_{n+1}}}\z dV_t+\Ord{h^3}\nonumber\\
&= u_{t_n}+a_{t_n}h+b_{t_n}(V_{t_{n+1}}-V_{t_n})+\opL_{t_n}^0a_{t_n}h^2/2+\Ord{h^3},\label{eq:p2n2}
\end{align}
where here, and in all following expansions, we replace the remainder term $R_{\kappa_0,\kappa_1}$ with the order of error obtained from Proposition~\ref{lem:3}.
In Figure~\ref{fig:tree} the four nodes appearing above the dashed line labelled ``$\rho=2$, $\kappa=2$'' represent the four terms of equation~\eqref{eq:p2n2}.
In general, for constant order~$\kappa$, continuously varying exponent~$\rho$ in the regime \(\varpi^{-1}\sim h^\rho\) is equivalent to continuously varying the strength of the oscillation relative to the time step~$h$.
Thus, while maintaining a constant order~$\kappa$ so that the order with respect to~$h$ does not change, varying exponent~$\rho$ in equation~\eqref{eq:general} encompasses oscillators of any relative frequency.

\section{Examples}
\label{sec:examples}
We present three examples which demonstrate how one obtains a numerical integration scheme, parametrised by time step~$h$ and oscillation frequency~$\varpi$, from the integrals of equation~\eqref{eq:general}.

\subsection{Purely oscillatory system}
We begin with a scalar \ode\ which only involves a rapidly oscillating term~$v_t$ and the function $b(t,u)=u^{1-\gamma}$, namely
\begin{equation}
\frac{du}{dt}=u^{1-\gamma}v_t\,,\quad u(0)=u_0\,.
\label{eq:powode}
\end{equation}
This \ode\ is readily solved analytically by separation of variables: for any exponent~$\gamma$
\begin{equation}
u_{h}=\begin{cases}
u_{0}\left[1+\gamma u_0^{-\gamma}(V_h-V_{0})\right]^{1/\gamma}, & \gamma\neq 0,\\
u_{0}\exp(V_h-V_{0}), & \gamma=0.\label{eq:solexact}
\end{cases}
\end{equation}

This example illustrates a straightforward implementation of equation~\eqref{eq:general}. 
Without loss of generality, we consider the one time step~$[0,h]$.
The given \ode~\eqref{eq:powode} has $a(t,u)=0$ and $(\opL_{0}^0)^mb_{0}=0$ for positive integer~$m$, so equation~\eqref{eq:general} reduces to 
\begin{align}
u_{h}&=u_{0}+\int_{V_0}^{V_{h}}\left(1+\int_{V_{0}}^{V_t}dV_t\,\opL_{0}^1+\int_{V_{0}}^{V_t}\int_{V_{0}}^{V_t}dV_t\,dV_t\,(\opL_{0}^1)^2
\right. \nonumber\\&\quad\left.{}
+\int_{V_{0}}^{V_t}\int_{V_{0}}^{V_t}\int_{V_{0}}^{V_t}dV_t\,dV_t\,dV_t\,(\opL_{0}^1)^3+\cdots\right)dV_t\,b_{0},
\label{eq:expand}
\end{align}
where we choose the order of the integral expansion to be $q\rightarrow\infty$\,.
Since
\begin{equation}
\int_{V_0}^{V_t}(V_t-V_0)^mdV_t=\frac{1}{m+1}(V_t-V_{0})^{m+1},
\end{equation}
the expansion of~$u_{h}$ in equation~\eqref{eq:expand} simplifies to
\begin{equation}
u_{h}=u_{0}+\sum_{m=1}^{\infty}\frac{1}{m!}(V_{h}-V_{0})^m\left[\left(u^{1-\gamma}\frac{\partial}{\partial u}\right)^{m-1}u^{1-\gamma}\right]_{0}.\label{eq:sol}
\end{equation}
In general, for $m>1$\,,
\begin{align}
\left[\left(u^{1-\gamma}\frac{\partial}{\partial u}\right)^{m-1}u^{1-\gamma}\right]_{0}&=u_{0}^{1-m\gamma}\prod_{s=1}^{m-1}(1-s\gamma)\\
&=\begin{cases}
u_{0}\left(\gamma u_{0}^{-\gamma}\right)^m\frac{\Gamma(1/\gamma+1)}{\Gamma(1/\gamma-m+1)}, & \gamma\neq 0,\\
u_{0}, & \gamma=0,
\end{cases}\nonumber
\end{align}
which, when substituted into equation~\eqref{eq:sol}, produces the Taylor series in time step~$h$ about $t=0$ of the analytic solution~\eqref{eq:solexact} in powers of $(V_{h}-V_0)\sim\varpi^{-1}$.
In this special case the Taylor series is just in powers of~$\varpi^{-1}$ rather than powers of both $h$~and~$\varpi^{-1}$. 

This example is particularly simple in that one evaluates all integrals in equation~\eqref{eq:expand} exactly, without having to truncate the infinite sum of integrals at some particular point.
For more complicated coefficient functions $a(t,u)$~and~$b(t,u)$ one must almost always truncate the series of integrals at some order.

\subsection{Oscillatory system with exponential macroscale}
\label{sec:exp-macro}

This section compares our integral method for solving the \ode{}~\eqref{eq:ode} with a method developed by Condon, Dea\~no and Iserles (\textsc{cdi})~\cite{Condon2009a, Condon2010a, Condon2010b}.
Let's consider nonlinear \ode{}s of the form 
\begin{equation}
\frac{du}{dt}=\alpha(t)u+\mu u^{1-\gamma}v(t),\quad u(0)=u_0\,,\label{eq:expDE}
\end{equation}
which have the exact solution
\begin{align}
u(t)&=\exp\left[\int_0^t \alpha(s)ds\right]\nonumber\\
&\quad{}\times\begin{cases}
\left\{\gamma\mu\int_0^t v(s)\exp \left[-\gamma\int_0^s \alpha(r)dr\right]ds+u_0^{\gamma}\right\}^{1/\gamma},
\, &\gamma\neq 0\,,\\
u_0\exp\left[\mu\int_0^t v(s)ds\right], & \gamma=0\,. 
\end{cases}
\end{align}
If the integrals in the above solutions cannot be solved analytically, they may be solved numerically using a Filon quadrature~\cite{Iserles2006}.
The macroscale behaviour of~$u(t)$ would be exponential for real~$\alpha(t)$ and sinusoidal when $\alpha(t)$~is imaginary.
The rapid microscale oscillations are superimposed on the macroscale.
Figure~\ref{fig:egOsc} plots two examples of solutions to the \ode~\eqref{eq:expDE}. 

The \cdi~method~\cite{Condon2009a, Condon2010a, Condon2010b} 
expands~$u(t)$ in terms of powers of~$\varpi^{-1}$, with the coefficients of these powers written in terms of a Fourier expansion,
\begin{equation}
u(t)=\sum_{r=0}^{\infty}\frac{1}{\varpi^{r}}\psi_r(t)
\quad\text{where}\quad
\psi_r(t)=\sum_{j=-\infty}^{\infty}\psi_{r,j}(t)e^{ij\varpi t}.\label{eq:ut}
\end{equation}
The oscillating function is also written as a Fourier expansion,
\begin{equation}
v_t=\sum_{j=-\infty}^{\infty}a_j(t)e^{ij\varpi t}.\label{eq:vt}
\end{equation}
On substituting equations~\eqref{eq:ut} and~\eqref{eq:vt} into the \ode~and equating similar powers of $\varpi^{-1}$~and~$e^{i\varpi t}$, one obtains equations to solve for the coefficients~$\psi_{r,j}(t)$:
each $\psi_{r,0}(t)$~is obtained from a first order \ode;
the remaining coefficients, $\psi_{r,j}(t)$ for $j\neq 0$, are a function of the coefficients $\psi_{q,i}(t)$ for $q<r$ for all~$i$.
For a solution of~$u(t)$ correct to~$\Ord{\varpi^{-m}}$, a possible disadvantage is that one must solve $(m+1)$~\ode{}s. 
Although the \cdi{} method appears quite cumbersome in its general form, 
the number of \ode{}s to be solved increases linearly with order, as discussed in more detail in Section~\ref{sec:nc}.
Furthermore,  the \ode{}s for the~$\psi_{r,0}(t)$ do not contain any rapidly oscillating terms so are readily solved by standard numerical methods. 

A disadvantage of the \cdi\ method is that significant pre-processing has to be done for every new differential equation to which the method is applied.
In contrast, apart from evaluating derivatives of the coefficient functions $a$~and~$b$, our method only needs new pre-processing if one changes the rapidly oscillating function~$v(t)$.

\subsubsection{Linear case}
\label{sec:lc}
%$a(t,u)=ut$, $b(t,u)=\mu$

%In order to compare, we use both methods to integrate~\footnote{
%The solution is of the form $u(t)=C(t)\exp(t^2/2)$ where $C(t)=\mu\int \cos\varpi t \exp(-t^2/2)dt$ can be computed using a Filon quadrature} 

We set $\alpha(t)=t$ and exponent $\gamma=1$\,,  so that $a(t,u)=ut$ and $b(t,u)=\mu$ is constant,  with sinusoidal rapid oscillations $v(t)=\cos\varpi t$\,.
The resulting linear \ode~is
\begin{equation}
\frac{du}{dt}=ut+\mu\cos\varpi t\,, \quad u(0)=u_{0}\,.
\label{eq:simple}
\end{equation}
Using both our integral method and the method of \textsc{cdi}, we evaluate this \ode~over the interval~$[0,h]$ correct to errors $R_{4,2}=\Ord{h^5}$ assuming $\varpi^{-1}\sim h^2$; that is, exponent $\rho=2$ and order $\kappa=4$ in Figure~\ref{fig:tree}.  

In the \cdi~method the nonzero~$a_j$ are $a_1=a_{-1}=\mu/2$.
Up to second order in~$\varpi^{-1}$ the \ode{}s are 
$d\psi_{r,0}(t)/dt=t\psi_{r,0}(t)$, for $r=0,1,2$, with initial conditions $\psi_{0,0}(0)=u_0$ and $\psi_{1,0}(0)=\psi_{2,0}(0)=0$. 
For this case there are only four other nonzero coefficients, $\psi_{1,\pm1}$ and $\psi_{2,\pm1}$.
Thus the \cdi\ estimate for \ode~\eqref{eq:simple} at $t_1=h$ is
\begin{equation}
u_{h}=u_{0}\exp(h^2/2)+\varpi^{-1}\mu\sin\varpi h-\varpi^{-2}h\mu\cos\varpi h+\Ord{\varpi^{-3}}.\label{eq:de}
\end{equation}

Solving the particular \ode~\eqref{eq:simple} using our integral method requires equation~\eqref{eq:general} with exponent $\rho=2$ and order $\kappa=4$\,,
\begin{align}&
u_{h}=u_{0}+\int_{0}^{h}\left[1+\int_{V_{0}}^{V_t}dV_t\, \opL_{0}^1+\int_{0}^tdt\, \opL_{0}^0+\int_{V_{0}}^{V_t}\int_{0}^tdt\,dV_t\, \opL_{0}^0\opL_{0}^1\right.
\nonumber\\&\left.{}
+\int_{0}^t\int_{V_{0}}^{V_t}dV_t\,dt\, \opL_{0}^1\opL_{0}^0+\int_{0}^t\int_{0}^tdt\,dt\, (\opL_{0}^0)^2+\int_{0}^t\int_{0}^t\int_{0}^tdt\,dt\,dt\, (\opL_{0}^0)^3\right]dt\,a_{0}  
\nonumber\\&{}
+\int_{V_{0}}^{V_{h}}\left[1+\int_{0}^tdt\, \opL_{0}^0+\int_{0}^t\int_{0}^tdt\,dt\, (\opL_{0}^0)^2+\int_{V_{0}}^{V_t}dV_t\, \opL_{0}^1\right]dV_t\,b_{0} +\Ord{h^5}.
\label{eq:n4p2}
\end{align}
We firstly calculate the relevant operations of~$\opL^{j}_t$ on the coefficient functions $a(t,u)=ut$ and $b(t,u)=\mu$:
\begin{align}
\opL^0_ta_t&=(1+t^2)u_t,  &(\opL^0_t)^2a_t&=(3+t^2)tu_t,\nonumber\\
(\opL^0_t)^3a_t&=(3+6t^2+t^4)u_t,  &\opL_t^1a_t&=\mu t,\nonumber\\
\opL^0_t\opL^1_ta_t&=\mu,  &\opL^1_t\opL^0_ta_t&=\mu(1+t^2),\nonumber\\
\opL^{0,1}_tb_t&=0,  &(\opL^0_t)^2b_t&=0.\label{eq:Ls}
\end{align}
On substituting these with $t=0$ into equation~\eqref{eq:n4p2}, evaluating all integrals, using Table~\ref{tab:ints},  we obtain
\begin{equation}
u_{h}=u_{0}(1+h^2/2+h^4/8)+\varpi^{-1}\mu\sin\varpi h+\Ord{h^5}.\label{eq:int}
\end{equation}
Further work shows that the $\Ord{h^5}$ and~$\Ord{h^6}$ corrections are 
$-\varpi^{-2} h\mu\cos\varpi h$ and $u_0h^6/48+\varpi^{-3}\mu\sin\varpi h$, 
respectively.

The two estimates in equations~\eqref{eq:de} and~\eqref{eq:int} obtained via the two different methods are not identical.
One reason for the difference is that the \cdi~method only involves an expansion in the microscale time~$\varpi^{-1}$, whereas our integral method involves an expansion in both $\varpi^{-1}$~and~$h$.
Consequently, the first term in equation~\eqref{eq:de} is~$u_{0}\exp(h^2/2)$, but in equation~\eqref{eq:int} this term is replaced by a Taylor expansion in~$h$ with error~$\Ord{h^5}$.
The different expansions also affect how the solutions are truncated: the \textsc{cdi} estimate has error~$\Ord{\varpi^{-3}}$ but no apparent $h$~dependent error; whereas the other estimate has error $R_{4,2}=\Ord{h^5}=\Ord{\varpi^{-5/2}}$.
Thus, the term $\varpi^{-2} h\mu\cos\varpi h$ appears in the \textsc{cdi} estimate but not in the integral method estimate because in the former it is less than the required order, but in the latter it is not.

\subsubsection{A nonlinear case}
\label{sec:nonlin}
Let's choose the case of \ode~\eqref{eq:expDE} with exponent $\gamma=-1$ and constant $\alpha(t)=\alpha$ so that $a(t,u)=\alpha u$ and $b(t,u)=\mu u^2$.
We also choose complex rapid oscillations $v(t)=e^{i\varpi t}$ so that the \ode~\eqref{eq:expDE} becomes
\begin{equation}
\frac{du}{dt}=\alpha u+\mu u^2e^{i\varpi t}, \quad u(0)=u_{0}\,.
\label{eq:nonlinode}
\end{equation}
As in the linear case, section~\ref{sec:lc}, we solve this nonlinear \ode\ over the time interval~$[0,h]$ correct to errors $R_{4,2}=\Ord{h^5}$, assuming $\varpi^{-1}\sim h^2$.  

For the \cdi~method the only nonzero $a_j$~coefficient is $a_1=1$.
The \ode{}s for~$\psi_{r,0}$ are trivial, as in the linear example of section~\ref{sec:lc};\footnote{The \cdi{} method involves a Fourier expansion of~$v(t)$ and derivatives of the functions~$a(t,u)$ and~$b(t,u)$ with respect to~$u$.
Therefore, the \cdi\ method is particularly simple when $v(t)$~is exponential or sinusoidal, and $a(t,u)$ and~$b(t,u)$ are small powers of~$u$.} 
namely, $d\psi_{r,0}(t)/dt=\alpha \psi_{r,0}(t)$ for $r=0,1,2$ and $\psi_{0,0}=u_0$, $\psi_{1,0}=-\psi_{1,1}(0)$, $\psi_{2,0}=-[\psi_{2,1}(0)+\psi_{2,2}(0)]$.
On evaluating all $\psi_{r,j}$~coefficients, at time $t_1=h$
\begin{align}
u_h&=u_0e^{\alpha h}+\varpi^{-1}(1-v_he^{\alpha h})i\mu u_0^2e^{\alpha h}\nonumber\\
&\quad{}
+\varpi^{-2}[-(\alpha +\mu u_0)+(\alpha +2\mu u_0)v_he^{\alpha h}-\mu u_0v_h^2e^{2\alpha h}]\mu u_0^2e^{\alpha h}
\nonumber\\&\quad{}
+\Ord{\varpi^{-3}}.\label{eq:nonlin}
\end{align}

For the iterative integral method, the differential operations of~$\opL_t^{j}$ on~$a(t,u)$ and~$b(t,u)$ at the initial time $t=0$ are
\begin{align}
(\opL_0^0)^n(\opL_0^1)^ma_0&=\alpha^{n+1}\mu^m m!u_0^{m+1},\nonumber\\
(\opL_0^0)^n(\opL_0^1)^mb_0&=2^n\alpha^n\mu^{m+1}(m+1)!u_0^{m+2}.\label{eq:ops}
\end{align}
Table~\ref{tab:ints} provides the required integrals, $I^p_m$ and~$J^p$ with phase $\phi=0$.
Substituting the integrals and equation~\eqref{eq:ops} into equation~\eqref{eq:general} produces 
%u_h&=u_0+(1+h\alpha/2+h^2\alpha^2/6+h^3\alpha^3/24)h\alpha u_0+(1-v_h)i\mu u_0^2\varpi^{-1}\nonumber\\
%&{}+(1-2v_h)i\alpha\mu u_0^2h\varpi^{-1}+(1-4v_h)i\alpha^2\mu u_0^2h^2\varpi^{-1}/2\nonumber\\
%&{}-[(1-v_h)\mu u_0+\alpha](1-v_h)\mu u_0^2\varpi^{-2}+\Ord{h^5},
\begin{align}
u_h&=u_0+h(1+h\alpha/2+h^2\alpha^2/6+h^3\alpha^3/24)\alpha u_0\nonumber\\&\quad{}
+\varpi^{-1}[(1+\alpha h+\alpha^2 h^2/2)-v_h(1+2\alpha h+2\alpha^2 h^2)]i\mu u_0^2
\nonumber\\&\quad{}
-\varpi^{-2}[(1-v_h)\mu u_0+\alpha](1-v_h)\mu u_0^2+\Ord{h^5},\label{eq:nonlinsol}
\end{align}
Again this expression is the Taylor series expansion of the \cdi~\eqref{eq:nonlin} in~$h$, as expected.
The $\Ord{h^5}$~correction is
\begin{eqnarray}&&
h^5\alpha^5u_0/120+h^3\varpi^{-1}(1-8v_h)i\alpha^3\mu u_0^2/6
\nonumber\\&&{}
-h\varpi^{-2}[\alpha(1-2v_h)+(1-v_h)(1-3v_h)\mu u_0]\alpha\mu u_0^2.
\end{eqnarray}

\subsection{Cater for unknown microscale phase}

The microscale oscillations may be so fast that we do not know the phase of the oscillations: in modelling oscillations we know that phases easily drift but amplitudes are much more robust \cite[e.g.]{Abrams06, Brown2004, Cross93}.
Further, a small uncertainty in the frequency will, over the many oscillations in one time step~$h$,  manifest itself as a de-correlation of the phase of~$v(t)$ at the end of the time step~$h$ compared to that at the beginning.
An average over all phases reflects a modelling of such de-correlation.
Thus this section addresses issues arising from uncertain phases of the microscale oscillations.

Suppose the oscillation~$v_t$ includes an unknown `random' phase~$\phi$ which we accommodate in analysis by replacing~$v(t)$ by~$v(t+\phi)$.
In this case, the procedure for finding the series expansion of the \ode{} solution does not change.
Once $u_{t_{n+1}}$~has been obtained as a function of~$\phi$, an average over all~$\phi$ is performed, defined by
\begin{equation}
\langle\cdot\rangle_{\phi}=\left.\int_{\phi}(\cdot)d\phi\right/\int_{\phi}d\phi,
\end{equation}
where the subscript~$\phi$ on the integrals refers to the domain of the phase~$\phi$.

For example, consider the \ode{}~\eqref{eq:ode} with general functions~$a(t,u)$ and~$b(t,u)$ and $v(t)=\cos(\varpi t+\phi)$.
Both  $a(t,u)$ and~$b(t,u)$ are independent of~$\phi$.
The $\phi$-averaged solution at~$t_{n+1}$, for exponent $\rho=2$ and order $\kappa=4$ in Figure~\ref{fig:tree} and corresponding to error $R_{4,2}$, is obtained by averaging equation~\eqref{eq:general} over all phases~$\phi$,
\begin{align}
\langle u_{t_{n+1}}\rangle_{\phi}&=\langle u_{t_n}\rangle_{\phi}+\left\langle\int_{t_n}^{t_{n+1}}\left(1+\int_{t_n}^tdt\, \opL_{t_n}^0+
+\int_{t_n}^t\int_{t_n}^tdt\,dt\, (\opL_{t_n}^0)^2\right.\right.\nonumber\\
&\quad{}\left.\left.+\int_{t_n}^t\int_{t_n}^t\int_{t_n}^tdt\,dt\,dt\, (\opL_{t_n}^0)^3\right)dt\,a_{t_n} \right\rangle_{\phi} 
\nonumber\\
&\quad{}+\left\langle\int_{V_{t_n}}^{V_{t_{n+1}}}\left(\int_{V_{t_n}}^{V_t}dV_t\, \opL_{t_n}^1\right)dV_t\,b_{t_n}\right\rangle_{\phi} +\Ord{h^5}\nonumber\\
&=\langle u_{t_n}\rangle_{\phi}+a_{t_n}h+\opL^0_{t_n}a_{t_n}h^2/2+(\opL^0_{t_n})^2a_{t_n}h^3/6+(\opL^0_{t_n})^3a_{t_n}h^4/24\nonumber\\
&\quad{}+\opL^1_{t_n}b_{t_n}[1-\cos(\varpi h)]\varpi^{-2}/2+\Ord{h^5}.
\label{eq:phased}
\end{align}
In the above, we neglect all single integrals over~$V_t$ since they vanish after averaging over~$\phi$.
If higher order accuracy is required, Table~\ref{tab:ints} provides the relevant integrals, $J^p$, $K^p_m$ and~$L^p_m$.
Our integral expansion approach empowers the resolution of macroscale effects generated by microscale interactions, the last line of equation~\eqref{eq:phased}, without resolving all the complexity of microscale details, and in the presence of microscale uncertainty.

\subsection{Frequency dependent coefficients}
\label{sec:freqdep}
The oscillating function~$v(t)$ may have an amplitude which varies with the frequency, say $v(t)=\Ord{\varpi^{-\nu}}$.
This may describe situation where certain frequencies are attenuated by a filter.
For example, in electrical circuits a filter may affect all frequencies within a given range and cause the amplitude of the voltage across some circuit element to decrease in some frequency dependent way.
Possible examples include $v(t)=\varpi^{-1}\cos\varpi t$, where $\nu=1$ and the amplitude decreases with frequency, or $v(t)=\varpi^{1/2}e^{i\varpi t}$, where $\nu=-1/2$ and the amplitude increases with frequency.  
This case is roughly analogous with the noise term in \sde{}s, where on a microscale time scale~$dt$ the  stochastic fluctuations of the noise have `amplitude'${}\propto dt^{-1/2}$ (so that increments are${}\propto\sqrt{dt}$): 
here the microscale $dt\sim\varpi^{-1}$ so the analogous amplitude scales like~$\sqrt\varpi$; that is, the exponent $\nu=-1/2$. 
In essence we make predictions at finite large frequency~$\varpi$ through integral expansions truncated to reflect different distinguished limits, limits where the oscillations also become large.

For $v(t)=\Ord{\varpi^{-\nu}}$ one proceeds as before but must reconsider the order of each~$v(t)$ dependent term in the integral expansion~\eqref{eq:general}.
Recall that each integral over~$V_t$ is originally~$\Ord{\varpi^{-1}}$.
Now, with the additional factor of~$\varpi^{-\nu}$, each integral over~$V_t$ is~$\Ord{\varpi^{-(1+\nu)}}$.
Therefore, a term with $q_1$~integrals over~$V_t$ was previously~$\Ord{\varpi^{-q_1}}$ but is now~$\Ord{\varpi^{-q_1(1+\nu)}}$.
To reasonably ensure the higher order terms that appear in the corresponding residual (that is, those involving many integrals over $V_t$~and~$t$) are negligible compared to the lower order terms,
$\varpi^{-q_1(1+\nu)}$~should decrease with increasing~$q_1$.
As $\varpi^{-1}<1$ and $q_1>0$, we thus require $\nu>-1$.
To generalise equation~\eqref{eq:general} for $v(t)=\Ord{\varpi^{-\nu}}$ one 
replaces~$\kappa_1$ with~$\kappa'_1$ and defines $\kappa'_1=\kappa_1/(\nu+1)$.
The error of this generalised version of equation~\eqref{eq:general} is $R_{\kappa_0,\kappa'_1}=\ord{h^{\kappa_0}+\varpi^{-\kappa'_1}}$, from Proposition~\ref{lem:3}.

For example, consider the family of \ode{}s
\begin{equation}
\frac{du}{dt}=\alpha u+\varpi^{1/2}\mu u^2e^{i\varpi t}, \quad u(0)=u_{0}\,.
\label{eq:nonlinear}
\end{equation}
Each such \ode{} is identical to the nonlinear example in  Section~\ref{sec:nonlin}, with the exception that here we choose $v(t)=\varpi^{1/2}e^{i\varpi t}$ to have a frequency dependent amplitude.
We again solve over the interval~$[0,h]$ correct to errors~$\Ord{h^5}$, assuming $\varpi^{-1}\sim h^2$.
For this case, $\rho=2$, $\kappa=4$ and $\nu=-1/2$ so $\kappa_0=\kappa=4$ and $\kappa_1'=\kappa_1/(\nu+1)=\kappa/\rho(\nu+1)=4$ and the error is $R_{4,4}=\Ord{h^5}$.
After substituting into equation~\eqref{eq:general} with $\rho$ replaced by~$\rho'$ and evaluating all terms, we obtain the time step rule
\begin{align}
u_h=& S_4(h') u_0+\varpi^{-1/2}[S_3(h')
-v'_hS_3(2h')]i\mu u_0^2\nonumber\\
&{}-\varpi^{-1}[S_2(h')-2v'_hS_2(2h')+v_h'^2S_2(3h')]\mu^2u_0^3\nonumber\\
&{}-\varpi^{-3/2}[S_1(h')-v'_hS_1(2h')]\alpha\mu u_0^2\nonumber\\
&{}-\varpi^{3/2}[S_1(h')-3v'_hS_1(2h')+3v_h'^2S_1(3h')-v_h'^3S_1(4h')]i\mu^3 u_0^4\nonumber\\
&{}-\varpi^{-2}(1-v'_h)^22i\alpha\mu^2u_0^3+\varpi^{-2}(1-v'_h)^4\mu^4u_0^5
+\Ord{h^5}
\end{align}
where $h'=h\alpha$, $v_h'=e^{i\varpi h}$ and the Taylor polynomial $S_n(x)=\sum_{j=0}^{n}x^j/j!$\,.
Thus our approach flexibly adapts to many different parameter regimes.

\section{Numerical considerations}
\label{sec:nc}

The \cdi~method and the integral method are both recursive so are scalable to higher orders when implementing a numerical solution.
However, if the \ode\ changes even slightly then all pre-processing calculations must be redone in the \cdi~method;
further the \cdi~method does not appear to have much scope for parallelisation as higher order terms depend explicitly on lower order terms.
%In contrast, in the integral method higher order terms are expressed independently of other terms, while also containing many equivalent factors, and, more importantly, similar \ode{} contain similar components in their solutions so a family of similar \ode{} may be evaluated simultaneously with minimal effort.
%Thus the integral method presented here is ideal for parallelisation. 

In our integral approach, for a given oscillation~$v_t$, we need to compute integrals of the form
\begin{equation}
\int_{t_n}^{t}(s-t_n)^pv_s^mds,\label{eq:ints}
\end{equation}
for non-negative integers~$p,m$, where the highest values of~$p$ and~$m$ are determined by the desired order of accuracy of the solution~$u_{t_{n+1}}$.
Some examples of these integrals are shown in Table~\ref{tab:ints}.
The above integrals are only calculated \emph{once} for any given oscillation~$v_t$ in the pre-processing.
%These integrals should be calculated first, before any simulation.
The numerical simulation for solving \ode{}~\eqref{eq:ode} then simply involves evaluating operations of~$\opL^0_t$ and~$\opL^1_t$ on~$a_t$ and~$b_t$ at $t=t_n$, (which are straightforward derivatives) and substitution into equation~\eqref{eq:general}.
While the evaluation of integrals~\eqref{eq:ints} for a given~$v_t$ may be computationally expensive, possibly requiring extensive numerical calculations (for example, $v(t)=e^{i\cos\varpi t}$), once they are evaluated one can quickly solve for a family of \ode{}s~\eqref{eq:ode} with the same~$v(t)$ but different~$a(t,u)$ and~$b(t,u)$.
This contrasts with analogous numerical schemes for \sde{} where the corresponding stochastic integrals need to be computed on the fly since stochastic effects are independent in every time step and between every realisation.

Of particular importance to a numerical implementation is the increase in the number of terms as the order of the estimate is increased.
For the \cdi~method $\varrho\in\mathbb{N}$ is defined such that $a_j=0$ for all $|j|\geq \varrho+1$, and the maximum number of $\psi_{r,j}$~terms requiring calculation for a given~$r$ is $2r\varrho+1$~\cite{Condon2010b}.
For  example, in Section~\ref{sec:exp-macro}, $\varrho=1$ and so each~$\psi_r$ introduces up to $2r+1$~terms.
Recall that for this method~$\psi_r$ is the coefficient of~$\varpi^{-r}$.
Therefore, increasing the order of the solution from~$\Ord{\varpi^{-n+1}}$ to~$\Ord{\varpi^{-n}}$ requires, in general, a linear increase in number of $\psi_{r,j}$~terms of $2n\varrho+1$.  

For our integral method with error $R_{\kappa_0,\kappa_1}$ where $\kappa_0=\kappa$ and $\kappa_1=\kappa/\rho$, the number of integral terms to be calculated is
\begin{equation}
N(\kappa,\rho)=\sum_{i=0}^{\kappa}\sum_{j=0}^{\rho(\kappa-i)}\frac{(i+j)!}{i! j!}-1.
\end{equation}
For $\rho>1$ an increase in the order of the estimate from $\Ord{\varpi^{-n+1}}=\Ord{h^{(n-1)\rho}}$ to $\Ord{\varpi^{-n}}=\Ord{h^{n\rho}}$ results in an increase in the number of integral terms which is significantly more than the linear increase of the \cdi~method.
In this sense the integral method appears less efficient than the \cdi~method; however, these integrals are done only once as a pre-processing step, and are thereafter useful to solve a large family of \ode{}s.
For example, $N(n,1)=2(2^{n}-1)$ and so increasing the order from $\Ord{\varpi^{-n+1}}=\Ord{h^{(n-1)}}$ to $\Ord{\varpi^{-n}}=\Ord{h^{n}}$ the increases the number of integral terms by~$2^n$.
The larger the value of~$\rho$, the greater the increase in terms so~$2^n$ for $\rho=1$ is a lower bound for the increase in terms when $\rho>1$. 

\section{Relate stochastic Wiener process to oscillations}
\label{sec:wiener}

We have discussed the case of a rapid oscillator with a well defined and very short period of oscillation~$\varpi^{-1}$ such that $\varpi^{-1}\ll h<1$.
To conveniently truncate the expansions in both~$h$ and~$\varpi^{-1}$ we often define an exponent~$\rho$ such that $\varpi^{-1}\sim h^\rho$ and require $\rho>1$.
Larger exponents~$\rho$ are associated with higher frequency oscillators. 
In contrast, a stochastic process such as a Wiener process is noisy and has no well defined oscillation.
A noise term has many relevant, but unspecified, short and long time scales.
When expanding in terms of these time scales, it is the longer time scales (corresponding to slow `frequencies') which determine the order of a given term.
Therefore, for truncation purposes, only the slowest frequencies are relevant and these are defined by~$\varpi$.
An additional complication is that the amplitude of the noise is frequency dependent and typically, for noise with time scale~$\varpi^{-1}$, with amplitude~$\Ord{\varpi^{-\nu}}$ with $\nu=-1/2$, as discussed in Section~\ref{sec:freqdep}.
In general,
$V_{t_{n+1}}-V_{t_{n}}\sim \varpi^{-(\nu+1)}\sim h^{\rho'}$ for some $0<\rho'<1$ where $\rho'=\rho(\nu+1)$\,.
%, since this range for~$\rho'$ describes the slow `frequency' case. 

We now show how equation~\eqref{eq:general} connects to two stochastic schemes, the Euler scheme and the Milstein scheme, which are both used to solve Ito stochastic differential equations of the form given in equation~\eqref{eq:ode} but with $V_t$~replaced by a Wiener process~$W_t$ \cite[e.g.]{Kloeden2001, Iacus2008}.
We still require $h,\varpi^{-1}\ll 1$ so that the expansion is valid. We set $\kappa_0=\kappa$ and $\kappa_1'=\kappa_1/(\nu+1)=\kappa/\rho(\nu+1)=\kappa/\rho'$.
The Euler scheme is reproduced from equation~\eqref{eq:general}  when 
$1/2<\rho'<1$ and $1\leq \kappa<2\rho'$\,,
\begin{align}
u_{t_{n+1}}&=u_{t_n}+a_{t_n}\int_{t_n}^{t_{n+1}}\z dt+b_{n}\int_{W_{t_n}}^{W_{t_{n+1}}}\z dW_t+\Ord{h^{\varepsilon}}\nonumber\\
&=u_{t_n}+a_{t_n}(t_{n+1}-t_n)+b_{t_n}(W_{t_{n+1}}-W_{t_n})+R_{\kappa,\kappa/\rho'}\,.
\end{align}
When $\nu=-1/2$\,, $\rho'=\rho/2$ and $1\leq \kappa<\rho<2$\,. 
The Milstein scheme is 
%valid for shorter frequencies compared to the Euler scheme and is 
reproduced from equation~\eqref{eq:general} when $1/3<\rho'<1$ and $1,2\rho'\leq \kappa<1+\rho'\,,3\rho'$\,,
\begin{align}
u_{t_{n+1}}&=u_{t_n}+a_{t_n}\int_{t_n}^{t_{n+1}}\z dt+b_{t_n}\int_{W_{t_n}}^{W_{t_{n+1}}}\z dW_t+\opL_{t_n}^1b_{t_n}\int_{W_{t_n}}^{W_{t_{n+1}}}\z \int_{W_{t_n}}^{W_t}dW_t\,dW_t+R_{\kappa,\kappa/\rho'}\nonumber\\
&=u_{t_n}+a_n(t_{n+1}-t_n)+b_{t_n}(W_{t_{n+1}}-W_{t_n})\nonumber\\
&\quad{}+b_{t_n}(\partial b/\partial u)_{t_n}\left[(W_{t_{n+1}}-W_{t_n})^2-(t_{n+1}-t_n)\right]/2+R_{\kappa,\kappa/\rho'}\,,
\end{align}
where the final integral is evaluated using Ito's lemma. When $\nu=-1/2$\,, $2/3<\rho<2$ and $1,\rho\leq\kappa<1+\rho/2\,,3\rho/2$\,.
One can easily improve on these two schemes by choosing larger~$\kappa$ (resulting in a higher order of accuracy) and smaller~$\rho$ (accounting for longer time scales in the noise term) in equation~\eqref{eq:general}.
%As before, the error in the $u_{t_{n+1}}$-expansion is~$\Ord{h^{\varepsilon}}$ with $\varepsilon$ defined in Proposition~\ref{lem:3}.

\section{Conclusion}

We propose a straightforward methodology for integrating \ode{}s which contain rapidly oscillating factors.
These \ode{}s are not to be confused with smooth \ode{}s which have highly oscillatory solutions. 
Our method requires repeated iterations of the integral version of the chain rule, akin to that used for \sde{}s.
The method gives an estimate and a remainder for any time step~$h$ and period of oscillation~$\varpi^{-1}$.
The estimate over a time step is obtained by evaluating a series of straightforward integrals over time~$t$ and the oscillation~$V_t$, in terms of derivatives of the smooth coefficient functions which appear in the original differential equation.
The remainder gives an exact expression for the error to provide a bound in any given application.
Such rapidly oscillating systems require $\varpi^{-1}\ll h<1$\,, but our method is also applicable to any case within the limit $\varpi^{-1},h<1$\,.
%The method relates to integration of \sde{}s as stochastic Wiener processes formally possess oscillations over all time scales.

We expect the method presented here to adapt to more complex problems such as higher order differential equations and differential equations involving multiple rapid oscillators~\cite{Condon11}.
Another possibility for future research is the development of a derivative free scheme:
the scheme presented here requires the computation of derivatives of~$a(t,u)$ and~$b(t,u)$, which is may be inconvenient in applications.
%A derivative free scheme may avoid these restrictions and be applicable to a wider variety of \ode{}s.

\paragraph{Acknowledgement}
This research was supported by grant DP120104260 from the Australian Research Council.

\end{document}